\documentclass[a4paper]{article}
\usepackage{amssymb,amsmath}
\usepackage[latin2]{inputenc}

\usepackage{amssymb}
\usepackage{epstopdf}

\def\cocoa{{\hbox{\rm C\kern-.13em o\kern-.07em C\kern-.13em o\kern-.15em A}}}

\newtheorem{theorem}{Theorem}[section]

\newtheorem{lemma}[theorem]{Lemma}
\newtheorem{proposition}[theorem]{Proposition}
\newtheorem{remark}[theorem]{Remark}

\newtheorem{question}[theorem]{Question}

\newenvironment{proof}{\begin{trivlist}\item[]{\it
Proof.}}{\hfill$\square$\end{trivlist}}

 \begin{document}
\title{Discriminant of symmetric matrices as a sum of squares and the orthogonal group} 
\author{M\'aty\'as Domokos\thanks{Partially supported by 
OTKA NK72523, NK81203.} 
\\
{\small R\'enyi Institute of Mathematics, Hungarian Academy of 
Sciences,} 
\\ {\small P.O. Box 127, 1364 Budapest, Hungary,} 
{\small E-mail: domokos.matyas@renyi.mta.hu } 
}
\date{}
\maketitle 

\begin{abstract}
It is proved that the discriminant of $n\times n$ real symmetric matrices can be written as a sum of squares, where the number of summands equals the dimension of the space of $n$-variable spherical harmonics of degree $n$. 
The representation theory of the orthogonal group is applied to express the discriminant of three by three real symmetric matrices as a sum of five squares,  and to show that it can not be written as  the sum of less than five squares. It is proved that the discriminant of four by four real symmetric matrices can be written as a sum of seven squares. 
These improve results of Kummer from 1843 and Borchardt from 1846. 
\end{abstract}

\section{Introduction}\label{sec:intro}

The discriminant of a degree $n$ monic polynomial $p$ with roots $\lambda_1,\ldots,\lambda_n$ equals 
\[\prod_{1\leq i<j\leq n}(\lambda_i-\lambda_j)^2.\] 
Recall that it can be written as a polynomial function of  the coefficients of $p$.  
By the {\it discriminant $\delta(A)$ of an $n\times n$ real symmetric matrix} $A$ we mean the discriminant of the characteristic polynomial  of $A$. 
Recall that $\delta(A)$ is a homogeneous polynomial function in the entries of $A$, of degree 
$n(n-1)$. Moreover, $\delta(A)=0$ if and only if the matrix $A$ is degenerate (i.e. has multiple eigenvalues). 

Denote by ${\mathcal{M}}$ the space of $n\times n$ real symmetric matrices ($n\geq 2$). It is a vector space of dimension 
$n(n+1)/2$ over ${\mathbb{R}}$. 
It contains the subset ${\mathcal{E}}$ of degenerate real symmetric matrices, a real algebraic subvariety. Although ${\mathcal{E}}$ is the zero locus of the single polynomial $\delta\in{\mathbb{R}}[{\mathcal{M}}]$, it has codimension two in 
${\mathcal{M}}$ by a result of Neumann and Wigner (cf. \cite{lax:1997}). An algebraic explanation of the fact that 
the codimension is greater than $1$ is that $\delta$ can be written as a sum of squares in ${\mathbb{R}}[{\mathcal{M}}]$. 
An explicit presentation of $\delta$ as a sum of seven squares was given in the nineteenth century by Kummer   \cite{kummer} for $n=3$ (see Remark~\ref{remark:kummer} for details).  
Borchardt \cite{borchardt} showed that $\delta$ is a sum of squares for arbitrary $n$. More recent approaches to this result are due to Newell \cite{newell}, Ilyushechkin \cite{ilyushechkin}, 
Lax \cite{lax:1998}, Parlett \cite{parlett}. 
Denote by $\mu(n)$ the minimal number of summands in a representation of $\delta$ as a sum of squares. The exact value of $\mu(n)$  is known only for $n\leq 3$: a straightforward calculation yields $\mu(2)=2$,  and the equality $\mu(3)=5$ will be proved here. 

The approach of Lax \cite{lax:1998} to this problem is substantially different from the other works mentioned above, as it makes a crucial use of the conjugation action of the orthogonal group on the space of symmetric matrices. The present paper develops further the key ideas from 
\cite{lax:1998}, by exploiting deeper the representation theory of the orthogonal group.  
Our main results are the following. Theorem~\ref{thm:spherical}  asserts that the degree $n(n-1)/2$ homogeneous component of the vanishing ideal of the variety of degenerate real symmetric $n\times n$ matrices contains an $ SO_n$-submodule isomorphic to the space 
${\mathcal{H}}^n({\mathbb{R}}^n)$ of $n$-variable spherical harmonics of degree $n$. 
Consequently, the discriminant can be written as the sum of 
$\dim({\mathcal{H}}^n({\mathbb{R}}^n))=\binom{2n-1}{n-1}-\binom{2n-3}{n-1}$ squares. 
Note that the number $\dim({\mathcal{H}}^3({\mathbb{R}}^3))=7$ agrees with the bound for $\mu(3)$ obtained by Kummer (or recently by Parrilo \cite{parrilo} using an algorithm based on semidefinite programming). However, aided by representation theory we derive an explicit presentation of 
the discriminant of $3\times 3$ symmetric matrices  as the sum of five squares 
(we learnt after submission that essentially the same identity was found earlier by 
Watson \cite{watson} in a different way),  and prove that $\mu(3)=5$, see 
Theorem~\ref{thm:fivesquares}; this 
shows that $\mu(n)$ may be strictly smaller than the minimal dimension of an $SO_n$-invariant subspace in the degree $n(n-1)/2$ homogeneous component of the vanishing ideal of degenerate symmetric matrices. 
In Theorem~\ref{thm:n=4} we locate some further irreducible $ SO_4$-module summands in the degree six homogeneous component of the vanishing ideal of degenerate symmetric $4\times 4$ matrices, and conclude that the discriminant in this case can be written as the sum of seven squares. 

The paper is organized as follows. First in Section~\ref{sec:general} we place the problem into a more general context, relating degeneracy loci of linear actions of compact Lie groups, and point out the existence of some generalized "discriminants" that are sums of squares by basic principles of representation theory; this is done mainly for sake of completeness of the picture, only 
Lemma~\ref{lemma:onb} is logically necessary for the rest of the paper. Using an observation of Lax \cite{lax:1998} (see Lemma~\ref{lemma:lax}), we conclude in Section~\ref{sec:degsymmat} that $\mu(n)$ is bounded by the minimal dimension of an $SO_n$-invariant subspace in the degree $n(n-1)/2$ homogeneous component of the vanishing ideal of degenerate symmetric $n\times n$ matrices, see Lemma~\ref{lemma:irred}. 
Section~\ref{sec:cov} contains a crucial new ingredient in our work: an explicit construction of an 
$O_n$-module homomorphism ${\mathcal{T}}^\star$ from the $(n-1)$th exterior power of the space of trace zero symmetric $n\times n$ matrices into the degree $n(n-1)/2$ homogeneous component of the vanishing ideal of degenerate matrices.  In Section~\ref{sec:repsofonr} we recall the facts from the representation theory of the orthogonal group that we shall use. 
Our best general upper bound for $\mu(n)$ (cf. Theorem~\ref{thm:spherical}) is discussed in Section~\ref{sec:spherical}. 
The results on the  $3\times 3$ and $4\times 4$ cases are contained in 
Sections~\ref{sec:n=3} and \ref{sec:n=4}. In Section~\ref{sec:kercovstar} for general $n$ we locate some irreducible $SO_n$-module summands in the kernel of ${\mathcal{T}}^\star$; in the special cases $n=3,4$ these results are used in the previous two sections. 

We finish the Introduction by mentioning two related subsequent developments, partially inspired by the present work: M. Ra{\"\i}s \cite{rais} informed us about a generalization of the discriminant (and its representation as a sum of squares) in the context of reductive Lie algebras and their Cartan decomposition. C. Gorodski \cite{gorodski} generalizes our Theorem~\ref{thm:spherical} in the context of symmetric spaces.  (We learnt  also from \cite{gorodski} the references \cite{watson} and \cite{newell}, 
extending our account of earlier works on representing the discriminant of symmetric matrices as a sum of squares.) 


\section{General discriminants}\label{sec:general} 

Let $G$ be a compact Lie group (over ${\mathbb{R}}$) with a (smooth) representation on the finite dimensional real vector space $V$. Denote by ${\mathbb{R}}[V]$ the algebra of real valued polynomial functions on $V$ (the {\it coordinate ring of $V$} in the terminology of algebraic geometry). There is an induced representation of $G$ on ${\mathbb{R}}[V]$ 
given by the formula $(g\cdot f)(v):=f(g^{-1}v)$ ($g\in G$, $v\in V$, $f\in {\mathbb{R}}[V]$).  
Note that the homogeneous components of ${\mathbb{R}}[V]$ are $G$-stable. 
By a {\it $G$-invariant} we mean a polynomial $f\in{\mathbb{R}}[V]$ with $g\cdot f=f$ for all $g\in G$. 
The following statement is just a reformulation of the well known fact  that since $G$ is compact, for any finite dimensional representation of $G$ there exists a $G$-invariant symmetric, positive definite bilinear form on the underlying vector space. 

\begin{lemma}\label{lemma:onb} 
Any finite dimensional $G$-stable subspace $W$ in ${\mathbb{R}}[V]$ has  
an ${\mathbb{R}}$-basis $h_1,\ldots,h_n$ such that $H:=\sum_{i=1}^nh_i^2$ is $G$-invariant. 
Moreover, the zero locus of the polynomial $H$ in $V$ coincides with the common zero locus of  the elements of $W$. 
\end{lemma}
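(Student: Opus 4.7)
The plan is to use compactness of $G$ to produce a $G$-invariant positive definite inner product on $W$, and then to take $h_1,\ldots,h_n$ to be any orthonormal basis of $W$ with respect to this inner product. The $G$-invariance of $H = \sum_i h_i^2$ then becomes a formal consequence of the fact that sums of squared coordinates are preserved by orthogonal changes of basis.

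In detail, first I would pick an arbitrary inner product on the finite dimensional real vector space $W$ and average it against Haar measure on $G$ to obtain a $G$-invariant positive definite symmetric bilinear form $\langle\cdot,\cdot\rangle$ on $W$; let $h_1,\ldots,h_n$ be an orthonormal basis for this form. For any $g\in G$, the operator $f\mapsto g\cdot f$ restricted to $W$ preserves $\langle\cdot,\cdot\rangle$, so writing $g\cdot h_i=\sum_j a_{ji}(g)h_j$ the matrix $A(g)=(a_{ij}(g))$ is orthogonal. Since the $G$-action on $\mr[V]$ is by algebra automorphisms,
$$g\cdot H=\sum_i(g\cdot h_i)^2=\sum_{j,k}\Bigl(\sum_i a_{ji}(g)a_{ki}(g)\Bigr)h_jh_k=\sum_{j,k}\delta_{jk}h_jh_k=H,$$
using $A(g)A(g)^{\mathrm{t}}=I$. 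Conceptually, $H$ is the image under the $G$-equivariant multiplication map $\gpsym^2(W)\to \mr[V]$ of the element of $\gpsym^2(W)$ dual to the invariant form $\langle\cdot,\cdot\rangle$, and the latter is $G$-invariant by construction.

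The zero-locus statement requires no work: if $v\in V$ is a common zero of all elements of $W$, then in particular every $h_i(v)=0$, hence $H(v)=0$. Conversely, if $H(v)=0$ then the real numbers $h_i(v)$ satisfy $\sum_i h_i(v)^2=0$, forcing each $h_i(v)=0$; since $h_1,\ldots,h_n$ span $W$, every element of $W$ vanishes at $v$.

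There is no real obstacle here; the only ingredient beyond linear algebra is the existence of a $G$-invariant inner product on a finite dimensional representation of a compact group, which is standard. The lemma's role is to provide, uniformly and with control on the number of summands, the sum-of-squares presentations that the remainder of the paper will exploit.
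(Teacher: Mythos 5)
Your proof is correct and follows essentially the same route as the paper: fix a $G$-invariant inner product on $W$ (the paper invokes this as a well-known fact where you spell out the Haar-measure averaging), take an orthonormal basis, and use orthogonality of the transition matrices to see that $\sum_i h_i^2$ is fixed, with the zero-locus equivalence following from positivity of squares over $\mr$. The remark relating $H$ to the image of the dual of the invariant form under $\gpsym^2(W)\to\mr[V]$ is a nice conceptual gloss not present in the paper, but the substance is the same.
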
 

For a positive integer  $r$ set  
$V^{<r}:=\{v\in V\mid \dim(G\cdot v)<r\}$. 
Note that the dimension of any orbit $G\cdot v$ is less than or equal to $\dim(G)$. 

\begin{proposition}\label{prop:generaldiscriminant} Let $d$ denote the maximal dimension of an orbit in $V$.  For $r=1,\ldots,d$, there exists a non-zero $G$-invariant polynomial function $D_r$ on $V$, homogeneous of degree $2r$, such that $D_r(v)=0$ if and only if $v\in V^{<r}$. 
Moreover, $D_r$ is the sum of squares of homogeneous polynomials of degree $r$,  
and the number of summands is less than or equal to 
$\binom{\dim(G)}{r}\cdot\binom{\dim(V)}{r}$ 
(product of binomial coefficients). 
\end{proposition}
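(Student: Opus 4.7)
The plan is to realize $V^{<r}$ as the common zero locus of a $G$-stable finite-dimensional subspace $W\subseteq\mr[V]$ consisting of homogeneous polynomials of degree $r$, and then feed $W$ into Lemma~\ref{lemma:onb}. The starting observation is that, since $G$ is a Lie group with Lie algebra $\mathfrak{g}$ acting smoothly on $V$, the tangent space to the orbit $G\cdot v$ at $v$ is precisely $\mathfrak{g}\cdot v$, so $\dim(G\cdot v)$ equals the rank of the linear map $M(v)\colon\mathfrak{g}\to V$ defined by $X\mapsto X\cdot v$. After fixing bases $X_1,\ldots,X_N$ of $\mathfrak{g}$ (so $N=\dim G$) and $e_1,\ldots,e_m$ of $V$ (so $m=\dim V$), this map is represented by an $m\times N$ matrix whose entries are linear forms in $v$. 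In particular $v\in V^{<r}$ if and only if every $r\times r$ minor of $M(v)$ vanishes at $v$.

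I would then let $W\subseteq\mr[V]$ be the linear span of these $r\times r$ minors; they are homogeneous of degree $r$, and there are at most $\binom{N}{r}\binom{m}{r}$ of them, so $\dim W\leq\binom{\dim G}{r}\binom{\dim V}{r}$. To see that $W$ is $G$-stable, the key computation is $X\cdot(g^{-1}v)=g^{-1}\cdot\bigl((\mathrm{Ad}(g)X)\cdot v\bigr)$, which yields the transformation law $M(g^{-1}v)=g^{-1}\circ M(v)\circ\mathrm{Ad}(g)$; passing to the $r$-th exterior power gives $\Lambda^r M(g^{-1}v)=\Lambda^r(g^{-1})\circ\Lambda^r M(v)\circ\Lambda^r\mathrm{Ad}(g)$, whose matrix entries (i.e.\ the $r\times r$ minors of $M(g^{-1}v)$) are $\mr$-linear combinations of the original minors. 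Hence $g\cdot f\in W$ whenever $f\in W$.

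Applying Lemma~\ref{lemma:onb} to $W$ produces an orthonormal basis $h_1,\ldots,h_n$ (relative to some $G$-invariant inner product on $W$) such that $D_r:=\sum_{i=1}^n h_i^2$ is $G$-invariant and its zero locus equals the common zero locus of $W$. Each $h_i$ is a linear combination of minors, hence homogeneous of degree $r$, so $D_r$ is homogeneous of degree $2r$ and is a sum of $n=\dim W\leq\binom{\dim G}{r}\binom{\dim V}{r}$ squares. The common zero locus of $W$ is the vanishing locus of all $r\times r$ minors of $M(v)$, which by the rank characterization above is precisely $V^{<r}$. Non-vanishing of $D_r$ follows from the hypothesis $r\leq d$: there exists $v_0\in V$ with $\mathrm{rank}(M(v_0))\geq r$, so some minor does not vanish at $v_0$, hence $W\neq 0$ and $D_r\neq 0$.

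The only non-routine step is the verification that $W$ is $G$-stable, but the exterior-power transformation law above reduces it to a direct computation. The identification $\dim(G\cdot v)=\mathrm{rank}(M(v))$ is standard Lie-theoretic folklore (orbits of smooth compact group actions are smooth submanifolds with tangent space $\mathfrak{g}\cdot v$), so I do not expect a serious obstacle; all remaining assertions (degree, summand count, invariance, zero locus, non-vanishing) follow directly from the construction and Lemma~\ref{lemma:onb}.
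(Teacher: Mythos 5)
Your proposal is correct and follows essentially the same route as the paper: both construct the matrix of linear forms representing $X\mapsto X\cdot v$, characterize $V^{<r}$ by the vanishing of its $r\times r$ minors, show the span of those minors is $G$-stable via the transformation law $L_{g^{-1}v}=T(g^{-1})\circ L_v\circ\mathrm{ad}(g)$, and then invoke Lemma~\ref{lemma:onb}. The only cosmetic difference is that you package the invariance of the span of minors through the $r$-th exterior power, whereas the paper cites the Binet--Cauchy formula; these are the same argument in different dress.
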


\begin{proof} Fix $r\in\{1,\ldots,d\}$. One can find polynomial equations on $V$ whose common zero locus is $V^{<r}$ as follows. The dimension of the orbit of $v\in V$ is the  difference of the dimension of $G$ and the dimension of the stabilizer subgroup $G_v$ of $v$ in $G$. One can pass to the tangent representation of the 
Lie algebra ${\mathrm{Lie}}(G)$ on $V$, this is a Lie algebra homomorphism 
$\tau:{\mathrm{Lie}}(G)\to {\mathrm{gl}}(V)$ (where ${\mathrm{gl}}(V)$ is the Lie algebra of all linear transformations of $V$). The dimension of $G_v$ is the same as the dimension of its Lie algebra. Now 
\[{\mathrm{Lie}}(G_v)=\{A\in {\mathrm{Lie}}(G)\mid \tau(A)v=0\}\] 
For each $v\in V$ denote by $L_v$ the ${\mathbb{R}}$-linear map ${\mathrm{Lie}}(G)\to V$ given by $L_v(A):=\tau(A)v$ for 
$A\in{\mathrm{Lie}}(G)$. 
Note that the map $V\to \hom_{{\mathbb{R}}}({\mathrm{Lie}}(G),V)$, $v\mapsto L_v$ is ${\mathbb{R}}$-linear. 
Moreover, $\ker(L_v)={\mathrm{Lie}}(G_v)$, hence the dimension of the orbit of $v$ equals the dimension of the image of $L_v$. 
Thus $v$ belongs to $V^{<r}$ if and only if the rank of $L_v$ is less than $r$. 
Fixing a basis in $V$ and in ${\mathrm{Lie}}(G)$, $L_v$ is identified with a matrix of size $\dim(V)\times \dim(G)$, 
whose $(i,j)$-entry equals $\xi_{ij}(v)$, where $\xi_{ij}$ are linear forms on $V$. 
Consequently, $V^{<r}$ is the common zero locus of the determinants of the $r\times r$-minors of 
the matrix $(\xi_{ij})$. The determinant of an $r\times r$ minor is a degree $r$ homogeneous  
element in ${\mathbb{R}}[V]$ (unless it is the zero polynomial). The assumption $r\leq d$ implies that not all of these determinants are identically zero. 

Next we show that these degree $r$ homogeneous polynomials span a $G$-stable subspace in ${\mathbb{R}}[V]$. Indeed, it is easy to see that  
\[L_{g^{-1}v}=T(g^{-1})\circ L_v \circ {\mathrm{ad}}(g)\]
where ${\mathrm{ad}}:G\to GL({\mathrm{Lie}}(G))$ denotes the adjoint representation of $G$ on its Lie algebra, and 
$T:G\to GL(V)$ is the given representation of $G$ on $V$. Consequently, the image under $g\in G$ of the set of determinants of the  $r\times r$ minors of $(\xi_{ij})_{i=1,\ldots,\dim(V)}^{j=1,\ldots,\dim(G)}$ is the set of determinants of the $r\times r$ minors of $P(\xi_{ij})Q$, where $P,Q$ are the matrices of 
$T(g^{-1})$, ${\mathrm{ad}}(g)$ with respect to the chosen bases of $V$, ${\mathrm{Lie}}(G)$. 
By the Binet-Cauchy formula we conclude that the determinants of the $r\times r$ minors of $(\xi_{ij})$ span a $G$-stable subspace in ${\mathbb{R}}[V]$. 

Now the Proposition follows from Lemma~\ref{lemma:onb}. 
\end{proof} 

\begin{remark} \label{remark:special} An example of the general setup discussed above is the case of the orthogonal group $G:= O_n$ acting on the space $V:={\mathcal{M}}$ of real symmetric $n\times n$ matrices by 
conjugation: for $g\in  O_n$ and $A\in{\mathcal{M}}$ we have $g\cdot A:=gAg^T$ (matrix multiplication). 
Then the stabilizer of a matrix with distinct eigenvalues is zero-dimensional, so the maximal dimension of an orbit is $\dim( O_n)=n(n-1)/2$, and 
${\mathcal{E}}:=V^{<n(n-1)/2}$ is the set of degenerate matrices. 
Moreover, the Lie algebra of $ O_n$ is the space of skew-symmetric matrices (with the commutator as the Lie bracket). 
So in this case we get back exactly the polynomials vanishing on ${\mathcal{E}}$ that are constructed in 
\cite{lax:1998}. 
By Proposition~\ref{prop:generaldiscriminant} we conclude the existence of a degree 
$n(n-1)$ homogeneous $ O_n$-invariant $D$, which is a sum of squares, 
and the zero locus of $D$ is 
${\mathcal{E}}$. By an observation of Lax (see Lemma~\ref{lemma:lax} below) $D$ must coincide with  a positive scalar multiple of $\delta$. 
\end{remark} 

The above considerations motivate the following general question:    

\begin{question} 
When do the polynomials constructed in the proof of Proposition~\ref{prop:generaldiscriminant} generate the vanishing ideal of  $V^{<r}$? 
 \end{question} 
 

\section{Bounding $\mu(n)$ with the dimension of some irreducible representation}\label{sec:degsymmat}

As a representation of $ O_n$, the space 
${\mathcal{M}}$ decomposes as 
\begin{equation}\label{eq:decomp}
{\mathcal{M}}={\mathbb{R}} I\oplus {\mathcal{N}}
\end{equation} 
where ${\mathcal{N}}$ stands for the codimension one subspace of trace zero matrices, and $I$ is the 
$n\times n$ identity matrix. The projection onto the second direct summand in (\ref{eq:decomp}) identifies ${\mathbb{R}}[{\mathcal{N}}]$ with an $ O_n$-stable subalgebra of ${\mathbb{R}}[{\mathcal{M}}]$.  
Moreover, ${\mathbb{R}}[{\mathcal{M}}]={\mathbb{R}}[{\mathcal{N}}][{\mathrm{Tr}}]$ is a polynomial ring over ${\mathbb{R}}[{\mathcal{N}}]$ generated by the trace function ${\mathrm{Tr}}:{\mathcal{M}}\to{\mathbb{R}}$ (which is $ O_n$-invariant).  
Write ${\mathcal{F}}:={\mathcal{E}}\cap{\mathcal{N}}$ for the set of degenerate trace zero symmetric matrices. 
Denote by ${\mathcal{I}}({\mathcal{E}})$ the ideal in ${\mathbb{R}}[{\mathcal{M}}]$ consisting of the polynomials that vanish on 
${\mathcal{E}}$. 
Similarly, ${\mathcal{I}}({\mathcal{F}})$ stands for the vanishing ideal of ${\mathcal{F}}$ in 
${\mathbb{R}}[{\mathcal{N}}]$.  Obviously we have ${\mathcal{E}}={\mathbb{R}} I\oplus {\mathcal{F}}$, implying 
\begin{equation}\label{eq:ideals}
{\mathcal{I}}({\mathcal{E}})={\mathcal{I}}({\mathcal{F}})[{\mathrm{Tr}}].
\end{equation} 
Therefore the study of ${\mathcal{I}}({\mathcal{E}})$ is essentially equivalent to the study of ${\mathcal{I}}({\mathcal{F}})$. 

The definition of the discriminant in terms of the eigenvalues implies that $\delta$ belongs to the subalgebra ${\mathbb{R}}[{\mathcal{N}}]$ of ${\mathbb{R}}[{\mathcal{M}}]$. From now on we shall focus on the algebra ${\mathbb{R}}[{\mathcal{N}}]$ and the ideal ${\mathcal{I}}({\mathcal{F}})$.

\begin{remark}\label{remark:tracezero} 
Replacing ${\mathcal{M}}$ by its subspace ${\mathcal{N}}$ in the argument in Remark~\ref{remark:special}  
we get the bound  $\mu(n)\leq \binom{n(n+1)/2-1}{n-1}$. 
This is already better than the bound 
$\binom{\binom{n+1}{2}}{n}-\binom{\binom{n}{2}}{n}-(n-1)\binom{\binom{n}{2}}{n-1}$ 
that can be infered from  Section 4.3 of \cite{parlett}. 
For $n=4$ we get  $\mu(4)\leq 84$, a result stated without proof by Borchardt \cite{borchardt}. For $n=3$ we get  $\mu(3)\leq 10$. 
However, the better bound $\mu(3)\leq 7$ was obtained in \cite{kummer}. 
This will be generalized for arbitrary $n$ in Section~\ref{sec:spherical}.   
\end{remark}

Restricting to diagonal matrices one easily shows that no polynomial of degree less than $n(n-1)/2$ vanishes on ${\mathcal{F}}$. 
By Proposition~\ref{prop:generaldiscriminant} (and the explanation afterwards), the degree $n(n-1)/2$ homogeneous component ${\mathcal{I}}({\mathcal{F}})_{n(n-1)/2}$
of the ideal ${\mathcal{I}}({\mathcal{F}})$ is non-zero; even more, the common zero locus of 
${\mathcal{I}}({\mathcal{F}})_{n(n-1)/2}$ is ${\mathcal{F}}$. 
Moreover, ${\mathcal{I}}({\mathcal{F}})_{n(n-1)/2}$
is an $ O_n$-submodule in ${\mathbb{R}}[{\mathcal{N}}]$. Indeed, 
the subset ${\mathcal{F}}$ of ${\mathcal{N}}$ is $ O_n$-stable, hence the ideal ${\mathcal{I}}({\mathcal{F}})$ is an $ O_n$-submodule of ${\mathbb{R}}[{\mathcal{N}}]$. Since the action of $ O_n$ preserves the grading on ${\mathbb{R}}[{\mathcal{N}}]$, the homogeneous component  ${\mathcal{I}}({\mathcal{F}})_{n(n-1)/2}$ is an $ O_n$-submodule. 
To get the best bounds on $\mu(n)$, we shall switch from $ O_n$ to its subgroup $ SO_n$ consisting of the orthogonal matrices with determinant one ($ SO_n$ is callled the {\it special orthogonal 
group}).

\begin{lemma}\label{lemma:irred} 
Any non-zero $ SO_n$-submodule of ${\mathcal{I}}({\mathcal{F}})_{n(n-1)/2}$ has a basis $\{f_i\}$ such that $\delta=\sum f_i^2$. 
Consequently, $\mu(n)$ is less than or equal to the minimal dimension of an irreducible $ SO_n$-submodule contained in 
${\mathcal{I}}({\mathcal{F}})_{n(n-1)/2}$. 
\end{lemma}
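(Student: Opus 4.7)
The plan is to apply Lemma~\ref{lemma:onb} directly, with $G=SO_n$, $V=\tzrsm$, and the given non-zero $SO_n$-submodule $W\subseteq \ideal(\tzdsm)_{n(n-1)/2}$. This produces an $\mr$-basis $f_1,\dots,f_k$ of $W$ such that $H:=\sum_{i=1}^k f_i^2$ is $SO_n$-invariant. By construction $H$ is homogeneous of degree $n(n-1)$ in $\mr[\tzrsm]$ and, being a sum of squares of elements of $\ideal(\tzdsm)$, vanishes on $\tzdsm$.

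The crucial step is to identify $H$ with a positive scalar multiple of $\delta$ (restricted to $\tzrsm$). I would first argue that every $SO_n$-invariant polynomial on $\rsm$ is in fact $O_n$-invariant. Indeed, $-I$ acts trivially on $\rsm$, and the stabilizer in $O_n$ of any diagonal matrix with distinct eigenvalues contains the reflection $\mathrm{diag}(-1,1,\dots,1)$ of determinant $-1$; combined with the spectral theorem this shows every $O_n$-orbit on $\rsm$ is already an $SO_n$-orbit. Viewing $H$ via (\ref{eq:decomp}) as a $\mathrm{Tr}$-independent element of $\mr[\rsm]$, it is therefore an $O_n$-invariant of degree $n(n-1)$, and it vanishes on $\dsm=\mr I\oplus\tzdsm$ since $H(cI+B)=H(B)=0$ for $B\in\tzdsm$. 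The uniqueness (recalled in Section~\ref{sec:general} from~\cite{lax:1998}) of the discriminant among $O_n$-invariants of degree $n(n-1)$ vanishing on $\dsm$ then forces $H=c\,\delta$ for some $c\in\mr$. Since both $H$ and $\delta$ are non-negative on $\rsm$ (for $\delta$: the eigenvalues of a real symmetric matrix are real, so $\delta(A)=\prod_{i<j}(\lambda_i-\lambda_j)^2\geq 0$) and $H\not\equiv 0$ as $W\neq 0$, we must have $c>0$. Rescaling $f_i\mapsto c^{-1/2}f_i$ gives a basis of $W$ satisfying $\delta=\sum_i(c^{-1/2}f_i)^2$.

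For the deduction on $\mu(n)$: since $SO_n$ is compact, the finite-dimensional $SO_n$-module $\ideal(\tzdsm)_{n(n-1)/2}$ is completely reducible, hence contains an irreducible submodule $W$ of minimal dimension. Applying the first assertion to this $W$ expresses $\delta$ as a sum of $\dim(W)$ squares, giving the stated bound on $\mu(n)$.

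The main obstacle I anticipate is the small bookkeeping step that $SO_n$-invariants on $\rsm$ coincide with $O_n$-invariants; all other ingredients (Lemma~\ref{lemma:onb}, the decomposition $\dsm=\mr I\oplus\tzdsm$, the uniqueness of $\delta$, complete reducibility for the compact group $SO_n$) are either immediate or already recorded in the excerpt, so the argument is essentially an assembly of these.
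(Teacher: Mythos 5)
Your proof is correct and follows essentially the same path as the paper's: apply Lemma~\ref{lemma:onb} with $G=SO_n$ to get an orthonormal basis of $W$ whose sum of squares $H$ is an $SO_n$-invariant of degree $n(n-1)$ vanishing on $\tzdsm$, identify $H=c\delta$ with $c>0$ by the uniqueness of the discriminant (Lax), rescale, and then derive the bound on $\mu(n)$ from complete reducibility. The one difference is that the paper isolates the needed uniqueness as Lemma~\ref{lemma:lax} (the $SO_n$-on-$\tzrsm$ form, cited directly from \cite{lax:1998} via (\ref{eq:ideals})), whereas you re-derive it from the $O_n$-on-$\rsm$ version recalled in Section~\ref{sec:general}, using the observation that every $O_n$-orbit on $\rsm$ is already an $SO_n$-orbit (stabilizers of diagonal matrices contain a determinant $-1$ reflection, plus the spectral theorem) so that $SO_n$- and $O_n$-invariants on $\rsm$ coincide; this is a valid and self-contained way to fill that step.
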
 

\begin{proof} Let $W$ be a non-zero $ SO_n$-invariant subspace of the degree $n(n-1)/2$ homogeneous component of ${\mathcal{I}}({\mathcal{F}})$. By Lemma~\ref{lemma:onb}, $W$ has a basis 
$\{f_i\}$ such that $D:=\sum_if_i^2$ is $ SO_n$-invariant (non-zero by positivity of the summands). 
Moreover, $\deg(D)=n(n-1)$ and $D$ vanishes on ${\mathcal{F}}$. 
By Lemma~\ref{lemma:lax} below, $D=c\delta$ for some positive scalar $c\in{\mathbb{R}}$, so $\delta=\sum_i(c^{-1/2}f_i)^2$. 
\end{proof}

The following lemma is due to Lax; it is stated in \cite{lax:1998} for ${\mathcal{M}}$ and ${\mathcal{E}}$, but obviously holds in the form below by (\ref{eq:ideals}):

\begin{lemma}\label{lemma:lax} 
Up to scalar multiples, $\delta$ is the only degree $n(n-1)$ homogeneous $ SO_n$-invariant polynomial function on ${\mathcal{N}}$ that vanishes on ${\mathcal{F}}$. 
\end{lemma}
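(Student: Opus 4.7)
The plan is to restrict to the subspace of diagonal trace-zero matrices and then apply a standard divisibility argument for symmetric polynomials. First I would invoke the spectral theorem: every $A\in\tzrsm$ is $SO_n$-conjugate to a diagonal matrix, since any $g\in O_n$ diagonalizing $A$ can be modified (by negating a single column) to lie in $SO_n$ without changing the diagonal form. Moreover, every transposition of diagonal entries is realized by an $SO_n$-conjugation: for $n\geq 3$ by composing the relevant permutation matrix with a sign change at a coordinate fixed by the transposition, and for $n=2$ by conjugation with the quarter-turn rotation. This shows that evaluation at $\mathrm{diag}(\lambda_1,\ldots,\lambda_n)$ identifies $\mr[\tzrsm]^{SO_n}$ with a subring of $\mr[\lambda_1,\ldots,\lambda_n]^{\gpsym_n}/(\lambda_1+\cdots+\lambda_n)$, under which $\tzdsm$ corresponds to the locus where some $\lambda_i=\lambda_j$ (intersected with the trace-zero hyperplane).

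Next I would lift the restricted polynomial $\bar f$ to a symmetric polynomial on all of $\mr^n$ that vanishes on every hyperplane $\{\lambda_i=\lambda_j\}$, not merely on its intersection with the trace-zero hyperplane. Setting $\bar\lambda:=(\lambda_1+\cdots+\lambda_n)/n$ and choosing any symmetric, degree $n(n-1)$ homogeneous lift $\tilde f$ of $\bar f$, I define $F(\lambda):=\tilde f(\lambda_1-\bar\lambda,\ldots,\lambda_n-\bar\lambda)$. This $F$ is symmetric and homogeneous of degree $n(n-1)$, is translation-invariant along the line $\mr(1,\ldots,1)$, coincides with $\bar f$ on the trace-zero hyperplane, and crucially vanishes on every hyperplane $\{\lambda_i=\lambda_j\}$, since translating a point with $\lambda_i=\lambda_j$ by a multiple of $(1,\ldots,1)$ keeps it on that hyperplane while moving it onto the trace-zero hyperplane where $\bar f$ vanishes.

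Then the standard divisibility argument finishes the proof: $\lambda_i-\lambda_j$ divides $F$, and by symmetry the quotient $F/(\lambda_i-\lambda_j)$ is antisymmetric in $\lambda_i,\lambda_j$, hence divisible again by $\lambda_i-\lambda_j$, so $(\lambda_i-\lambda_j)^2\mid F$. Since these squares are pairwise coprime, $\delta=\prod_{i<j}(\lambda_i-\lambda_j)^2$ divides $F$, and comparing the degrees $n(n-1)$ on both sides forces $F=c\delta$ for some $c\in\mr$. Restricting back to the trace-zero hyperplane, and noting that $\delta$ is itself translation-invariant, yields $f=c\delta$ on $\tzrsm$.

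The delicate point, as I see it, is the very first paragraph: verifying that $SO_n$ (as opposed to the full $O_n$) already realizes both the diagonalization step and the Weyl-group action of $\gpsym_n$ permuting the eigenvalues. This is a small case-by-case check that must be handled separately for $n=2$ and $n\geq 3$. Once this reduction is in place, the rest of the argument is classical polynomial algebra and essentially automatic.
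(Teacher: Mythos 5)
Your proof is correct. Note that the paper does not actually prove this lemma: it attributes the result to Lax's 1998 paper and simply observes that the version for all symmetric matrices ($\rsm$, $\dsm$) in that paper transfers to the trace-zero setting via the equality $\ideal(\dsm)=\ideal(\tzdsm)[{\mathrm{Tr}}]$. So you have supplied a self-contained argument where the paper defers to a citation.

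The argument you give is the classical one and it holds up. The reduction to trace-zero diagonal matrices is valid once one checks, as you do, that $SO_n$-conjugation already realizes both diagonalization and the full symmetric group acting on the diagonal entries; your case split between $n=2$ (quarter-turn rotation) and $n\geq 3$ (permutation matrix times a sign change on a fixed coordinate) is exactly the verification needed. The lifting step via the projection $\lambda\mapsto(\lambda_1-\bar\lambda,\dots,\lambda_n-\bar\lambda)$ is the right device: it produces a symmetric, homogeneous, degree-$n(n-1)$ polynomial $F$ on all of $\mr^n$ vanishing on each hyperplane $\{\lambda_i=\lambda_j\}$, not merely on its slice in the trace-zero hyperplane, and this is what makes the divisibility argument go through. (One small streamlining: since the projection lands in the trace-zero hyperplane, you can compose $\bar f$ itself with the projection and never need to choose a lift $\tilde f$.) From there, symmetry of $F$ forces antisymmetry of $F/(\lambda_i-\lambda_j)$ in $\lambda_i,\lambda_j$, giving $(\lambda_i-\lambda_j)^2\mid F$, and pairwise coprimality yields $\prod_{i<j}(\lambda_i-\lambda_j)^2\mid F$; the degree comparison then pins $F$ down as a scalar multiple of the discriminant, including the degenerate case $F=0$. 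Restricting back and using injectivity of the restriction map on $SO_n$-invariants (every trace-zero symmetric matrix is $SO_n$-conjugate to a trace-zero diagonal) finishes. No gaps.
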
 


\section{An $ O_n$-module homomorphism  into ${\mathcal{I}}({\mathcal{F}})$}\label{sec:cov}   

Next we turn to a crucial step in the present paper, and provide a simple construction of a non-zero  $ O_n$-module homomorphism into ${\mathcal{I}}({\mathcal{F}})_{n(n-1)/2}$.  
One has the $ O_n$-equivariant polynomial maps 
\begin{equation}\label{eq:H_i}H_i:{\mathcal{N}}\to {\mathcal{N}},\quad  
A\mapsto A^i-\frac{1}{n}{\mathrm{Tr}}(A^i)I\end{equation}  
for $i=1,2,\ldots$. 
Using them one defines a map from ${\mathcal{N}}$ to the degree $n-1$ exterior power of ${\mathcal{N}}$: 
\begin{equation}\label{eq:T}
{\mathcal{T}} :{\mathcal{N}}\to \bigwedge^{n-1}{\mathcal{N}}, \quad 
A\mapsto A\wedge H_2(A)\wedge\cdots\wedge H_{n-1}(A) \end{equation} 

\begin{proposition}\label{prop:cov} 
For $A\in{\mathcal{N}}$ we have ${\mathcal{T}}(A)=0$ if and only if $A$ belongs to ${\mathcal{F}}$. 
\end{proposition}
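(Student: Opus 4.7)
My plan is to reduce the wedge product condition to a statement about linear dependence of the powers of $A$, and then use the diagonalizability of symmetric matrices.

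First, I observe that since $\cov(A)$ is a wedge of $n-1$ elements in $\tzrsm$, we have $\cov(A)=0$ if and only if the vectors $A=H_1(A), H_2(A),\ldots,H_{n-1}(A)$ are linearly dependent in $\tzrsm$ (note $H_1(A)=A$ because $A$ has trace zero). This is a basic fact about exterior algebra and requires no computation.

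Next I would translate this dependence into the ambient space $\rsm$. Consider the surjective $\mr$-linear projection $\pi:\rsm\to\tzrsm$, $B\mapsto B-\tfrac{1}{n}\mathrm{Tr}(B)I$, whose kernel is $\mr I$. By construction $H_i(A)=\pi(A^i)$. Therefore $H_1(A),\ldots,H_{n-1}(A)$ are linearly dependent in $\tzrsm$ if and only if some nontrivial linear combination of $A,A^2,\ldots,A^{n-1}$ lies in $\ker\pi=\mr I$, which is in turn equivalent to the linear dependence of $I,A,A^2,\ldots,A^{n-1}$ in $\rsm$.

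Finally I would invoke the spectral theorem: since $A$ is real symmetric it is diagonalizable, so its minimal polynomial equals $\prod_{\lambda}(t-\lambda)$ where $\lambda$ ranges over the distinct eigenvalues of $A$. The matrices $I,A,\ldots,A^{n-1}$ are linearly independent precisely when the minimal polynomial has degree $\geq n$, i.e., when $A$ has $n$ pairwise distinct eigenvalues. Equivalently, $I,A,\ldots,A^{n-1}$ are dependent exactly when $A$ has a repeated eigenvalue, i.e., $A\in\tzdsm$. Chaining the three equivalences yields the proposition.

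I do not expect any serious obstacle here; the only small point to be careful about is that $H_1(A)=A$ uses the trace-zero hypothesis (so the list begins at $H_1$ rather than at some shifted index), and that the projection $\pi$ restricted to the span of $I,A,\ldots,A^{n-1}$ has kernel exactly $\mr I$ because $I$ itself lies in this span. Both are immediate.
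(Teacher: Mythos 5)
Your proof is correct, and it takes a genuinely different route from the paper's. You argue directly on an arbitrary $A\in\tzrsm$: identify $H_i(A)$ with the image of $A^i$ under the projection $\pi:\rsm\to\tzrsm$ with kernel $\mr I$, translate the vanishing of the wedge into linear dependence of $I,A,\ldots,A^{n-1}$, and finish by the standard fact that for a diagonalizable matrix the degree of the minimal polynomial equals the number of distinct eigenvalues. The paper instead exploits $O_n$-equivariance of $\cov$ to reduce to trace-zero diagonal matrices, then handles the degenerate case by a dimension count (the $H_j$ land inside a subspace of dimension less than $n-1$) and the non-degenerate case by a Vandermonde determinant computation. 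The two arguments are of comparable length and both ultimately rest on the spectral theorem; yours is a bit more self-contained and avoids the orbit reduction, while the paper's version foreshadows the equivariance viewpoint that drives the rest of the article. Your parenthetical care about why $\ker\pi\cap\operatorname{span}\{I,A,\ldots,A^{n-1}\}=\mr I$ forces a nontrivial relation among the $H_i(A)$ is exactly the point one must not gloss over, and you handle it correctly.
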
 

\begin{proof} Denote by ${\mathcal{D}}$ the space of trace zero diagonal matrices, ${\mathcal{D}}_1$ the subspace of ${\mathcal{D}}$ consisting of the matrices whose first two diagonal entries coincide, and 
${\mathcal{D}}_0$ the subset of matrices with distinct diagonal entries. Clearly the $H_j$ map ${\mathcal{D}}_1$ into itself, so ${\mathcal{T}}({\mathcal{D}}_1)\subseteq\bigwedge^{n-1}{\mathcal{D}}_1=0$, since $\dim_{{\mathbb{R}}}({\mathcal{D}}_1)<n-1$. 
On the other hand, we claim that for $A\in{\mathcal{D}}_0$ the $H_j(A)$ $(j=1,\ldots,n-1)$ are linearly independent, and therefore span ${\mathcal{D}}$. Indeed, $A\in{\mathcal{D}}_0$ has distinct diagonal entries 
$a_1,\ldots,a_n$. Consider the Vandermonde matrix $V:=(a_i^{j-1})_{i,j=1}^n$, its columns are linearly independent. Denote by $V'$ the matrix obtained from $V$ by subtracting from the $j$th column of $V$ the first column of $V$ multiplied by 
$1/n\sum_{i=1}^na_i^{j-1}$, for $j=2,\ldots,n$. Clearly, the columns of $V'$ are linearly independent. Since $H_{j-1}(A)$ can be identified with the $j$th column of $V'$ for $j=2,\ldots,n$, our claim folows. Consequently, ${\mathcal{T}}(A)\in\bigwedge^{n-1}{\mathcal{D}}$ is non-zero. 

If $A\in{\mathcal{F}}$, then 
the $O_n$-orbit of $A$ intersects ${\mathcal{D}}_1$, therefore by $O_n$-equivariance of ${\mathcal{T}}$ we conclude that ${\mathcal{T}}(A)=0$. 
Similarly, if $A\in{\mathcal{N}}\setminus{\mathcal{F}}$, then the $O_n$-orbit of $A$ intersects ${\mathcal{D}}_0$, 
consequently ${\mathcal{T}}(A)\neq 0$. 
\end{proof}

Now ${\mathcal{T}}$ induces a non-zero $ O_n$-equivariant linear map ${\mathcal{T}}^\star$ from the dual 
space of $\bigwedge^{n-1}{\mathcal{N}}$ defined as follows: 
\begin{align*}
{\mathcal{T}}^\star:(\bigwedge^{n-1}{\mathcal{N}})^\star\to{\mathcal{I}}({\mathcal{F}})_{n(n-1)/2}\\
({\mathcal{T}}^\star(\xi))(A):=\xi({\mathcal{T}}(A))\mbox{ for }\xi\in(\bigwedge^{n-1}{\mathcal{N}})^\star,A\in{\mathcal{N}}   
\end{align*}
Indeed, ${\mathcal{T}}$ is a polynomial map, and in the terminology of algebraic geometry, ${\mathcal{T}}^\star$ is the restriction to $(\bigwedge^{n-1}{\mathcal{N}})^\star\subset{\mathbb{R}}[\bigwedge^{n-1}{\mathcal{N}}]$ of the comorphism of the morphism ${\mathcal{T}}$ of affine algebraic varieties. Since the polynomial map ${\mathcal{T}}$ is homogeneous of degree 
$1+2+\cdots+(n-1)=n(n-1)/2$, the image of ${\mathcal{T}}^\star$ is contained in the degree $n(n-1)/2$ 
homogeneous component of ${\mathbb{R}}[{\mathcal{N}}]$. Since ${\mathcal{T}}$ is $ O_n$-equivariant, the same holds for ${\mathcal{T}}^\star$. By Proposition~\ref{prop:cov}, the image of ${\mathcal{T}}^\star$ is a subspace of 
${\mathcal{I}}({\mathcal{F}})$, furthermore, the common zero locus in ${\mathcal{N}}$ of the polynomials from the image of ${\mathcal{T}}^\star$ is ${\mathcal{F}}$. 
In particular, ${\mathcal{T}}^\star$ is non-zero. 


\section{Representations of $ O_n$}\label{sec:repsofonr} 

A classical reference for the material in this section is \cite{weyl}; see also \cite{procesi}, 
\cite{goodman-wallach}, \cite{fulton-harris} for more modern treatments. 
By a {\it representation} of $ O_n$ (resp. $ SO_n$) we mean a Lie group homomorphism from 
$ O_n$ (or $ SO_n$) into the real Lie group of all linear transformations of a finite dimensional vector space over the field of real numbers. Since these groups are compact, all representations decompose as a sum of irreducibles, and all representations are self-dual.  
The irreducible representations of $ O_n$ and $ SO_n$ all appear as summands in the tensor powers of the defining representation of $ O_n$ on ${\mathbb{R}}^n$ (see \cite{weyl}), and  
the isomorphism classes of irreducible representations of $ O_n$ are  traditionallly labeled by partitions.  
By a partition $\lambda=(\lambda_1,\ldots,\lambda_n)$ we mean a decreasing sequence 
$\lambda_1\geq\cdots\geq\lambda_n\geq 0$ of non-negative integers. 
For $j=1,2,\ldots$, set $h_i(\lambda):=|\{j \mid \lambda_j\geq i\}|$ (the length of the $i$th column of the Young diagram of $\lambda$). The isomorphism classes of irreducible representations of $ O_n$ are in bijection with partitions $\lambda$  satisfying $h_1(\lambda)+h_2(\lambda)\leq n$ (see for example Section 6.5 in \cite{procesi}). Denote by $V_{\lambda}$ the irreducible $ O_n$-module corresponding to $\lambda$. 
For the partition $(d)=(d,0,\ldots,0)$ we have that 
$V_{(d)}\cong {\mathcal{H}}^d({\mathbb{R}}^n)$, the space of spherical harmonics of degree $d$ in $n$ variables. 
It can be constructed as follows: consider the natural representation of $ O_n$ on the coordinate ring ${\mathbb{R}}[x_1,\ldots,x_n]$ of ${\mathbb{R}}^n$, restrict to the degree $d$ homogeneous component, and take its factor space by the degree $d$ homogeneous multiples of $x_1^2+\cdots+x_n^2$. 

The restriction of an irreducible $ O_n$-module to $ SO_n$ either stays irreducible, or is the sum of two non-isomorphic irreducibles (having the same dimension). The details are as follows (they can be found for example on 
page 164 in \cite{weyl}): 
If $h_1(\lambda)<n/2$,
then the restriction $W_{\lambda}:={\mathrm{Res}}^{ O_n}_{ SO_n}V_{\lambda}$ remains irreducible over the special orthogonal group $ SO_n$. 
Moreover, denoting by $\lambda^\circ$ the partition with $h_1(\lambda^\circ)=n-h_1(\lambda)$ and  
$h_i(\lambda^\circ)=h_i(\lambda)$ for $i>1$, we have that $V_{\lambda^\circ}$ is isomorphic to 
the tensor product of $V_{\lambda}$ and the determinant representation of $ O_n$, hence the restriction to $ SO_n$ of $V_{\lambda^\circ}$ is also isomorphic to $W_{\lambda}$. 
When $n=2l+1$ is odd, then 
$\{W_{\lambda}\mid h_1(\lambda)\leq n/2\}$ is a complete list of isomorphism classes of irreducible $ SO_n$-modules. 
When $n=4m$ is divisible by four and 
$h_1(\lambda)=n/2$, then ${\mathrm{Res}}^{ O_n}_{ SO_n}V_{\lambda}$ decomposes as the direct sum 
$W_{\lambda}\oplus W_{(\lambda_1,\ldots, \lambda_{2m-1},-\lambda_{2m})}$ of two non-isomorphic irreducible 
$ SO_n$-modules having the same dimension, and 
$\{W_{(\lambda_1,\ldots,\lambda_{2m})}\mid \lambda_1\geq\cdots\geq\lambda_{2m-1}\geq |\lambda_{2m}|\}$ 
is a complete list of isomorphism classes of irreducible representations of $ SO_n$. 
When $n=4m+2$ and $h_1(\lambda)=n/2$, then ${\mathrm{Res}}^{ O_n}_{ SO_n}V_{\lambda}$ remains irreducible over $ SO_n$, and 
$\{W_{\lambda}\mid h_1(\lambda)<n/2\}\cup\{{\mathrm{Res}}^{ O_n}_{ SO_n}V_{\lambda}\mid h_1(\lambda)=n/2\}$  
is a complete list of isomorphism classes of irreducible $ SO_n$-modules.  

Although in our problem we are dealing with real representations of real Lie groups, in order to 
study concrete representations we shall apply the so-called {\it highest weight theory}, and therefore we shall need to change to representations of the complex orthogonal groups 
$O_n({\mathbb{C}}) :=\{A\in{\mathbb{C}}^{n\times n}\mid A^TA=I\}$ and $ SO_n({\mathbb{C}}):=\{A\in O_n({\mathbb{C}})  \mid \det(A)=1\}$. 
The passage is as folows: 
First recall that  a {\it complex representation of a real Lie group $G$} is a (real) Lie group homomorphism from $G$ into the group of invertible linear transformations of some finite dimensional complex vector space. For any representation of a real Lie group $G$ on some finite dimensional real vector space $V$ there is an associated complex representation of $G$ (called {\it its complexification}): namely, consider the induced ${\mathbb{C}}$-linear action of $G$ on 
${\mathbb{C}}\otimes_{{\mathbb{R}}}V$. 
The complexification of an irreducible $ O_n$-module or $ SO_n$-module 
stays irreducible, with the exception that when $n=4m+2$ and $h_1(\lambda)=n/2$, then the 
restriction to $ SO_n$ of the complexification of the irreducible 
$ O_n$-module $V_{\lambda}$ splits as the sum 
$W_{\lambda}+W_{(\lambda_1,\ldots,\lambda_{2m},-\lambda_{n/2})}$of two 
non-isomorphic equidimensional irreducible complex representions 
of $ SO_n$ (just like as it happens already over the reals when $n=4m$). 
Next recall that representations of $ O_n$ or $ SO_n$ are {\it polynomial}, that is, the matrix elements  
of a representation are polynomials in the matrix entries of the elements of our group.  
Therefore the complexification of a representation on $V$ extends to a polynomial representation of $O_n({\mathbb{C}}) $ or $ SO_n({\mathbb{C}})$ on the complexified vector space ${\mathbb{C}}\otimes_{{\mathbb{R}}}V$. This extension is unique (since the equations defining our groups 
inside the space of $n\times n$ matrices are the same in the complex and the real cases). 
Given an irreducible $ O_n$-module (or $ SO_n$-module) $V_{\lambda}$ or $W_{\lambda}$, we keep the same symbol to denote the corresponding irreducible polynomial representations of 
the corresponding complex group $O_n({\mathbb{C}}) $ or $ SO_n({\mathbb{C}})$. 
It is clear from the discussion above that given a representation of $ O_n$ or $ SO_n$ on $V$, the multiplicity of an irreducible representation $V_{\lambda}$ or $W_{\lambda}$ as a summand in $V$ is the same as the multiplicity of the corresponding irreducible representation of $O_n({\mathbb{C}}) $ or $ SO_n({\mathbb{C}})$ as a summand in ${\mathbb{C}}\otimes_{{\mathbb{R}}}V$. 

The so-called highest weight theory is a standard tool to decompose a given polynomial 
$ SO_n({\mathbb{C}})$-module as a sum of irreducibles. 
To apply highest weight theory it is convenient to perform a linear change of variables and work with the orthogonal group 
\begin{eqnarray*}O_n({\mathbb{C}},J)&:=&\{A\in{\mathbb{C}}^{n\times n}\mid A^TJA=J\}\\
 SO_n({\mathbb{C}},J)&:=&\{A\in O_n({\mathbb{C}},J)\mid \det(A)=1\}
 \end{eqnarray*} 
preserving the symmetric bilinear form on ${\mathbb{C}}^n$ with matrix $J$, where for $n=2l$ even we have 
$J=\left(\begin{array}{cc}0& I \\  I & 0\end{array}\right)$, a $2\times 2$  block matrix, with the $l\times l$ identity matrix  $I$ in the off-diagonal positions, and the zero matrix in the diagonal positions, and for $n=2l+1$ odd we have 
$J=\left(\begin{array}{ccc}0&I &0\\  I &0& 0\\ 0& 0& 1\end{array}\right)$. 
Denote by ${\mathbb{T}}$ the subgroup of $ SO_n({\mathbb{C}},J)$ consisting of the diagonal matrices 
\[\{{t=\mathrm{diag}}(t_1,\ldots,t_l,t_1^{-1},\ldots,t_l^{-1})\mid 
t_1,\ldots,t_l\in{\mathbb{C}}^\times\}\] 
when $n=2l$ and 
\[\{t={\mathrm{diag}}(t_1,\ldots,t_l,t_1^{-1},\ldots,t_l^{-1},1)\mid 
t_1,\ldots,t_l\in{\mathbb{C}}^\times\}\] 
when $n=2l+1$. Then  ${\mathbb{T}}$ is a maximal torus in $ SO_n({\mathbb{C}},J)$ (in the terminology of algebraic groups). 
Characters of ${\mathbb{T}}$ are identified with $l$-tuples of integers: given 
$\alpha=(\alpha_1,\ldots,\alpha_l)\in{\mathbb{Z}}^l$ 
and $t\in {\mathbb{T}}$ as above we write $\alpha(t):=\prod_{i=1}^lt_i^{\alpha_i}$. 
An element $v$ in an $ SO_n({\mathbb{C}},J)$-module $V$ is called a weight vector if for some character $\alpha$ of ${\mathbb{T}}$ we have $t\cdot v=\alpha(t)v$ ($t\in {\mathbb{T}}$); in this case we call $\alpha$ the {\it weight} of 
$v$. 
Denote by $u^+$ the unipotent radical given for example in section 10.4.1 in \cite{procesi}  of the positive Borel subalgebra  of the Lie algebra ${\mathrm{so}}_n({\mathbb{C}},J)$ of $ SO_n({\mathbb{C}},J)$. 
A non-zero element $w$ in an $ SO_n({\mathbb{C}},J)$-module $W$ is called a {\it highest weight vector of weight} 
$\lambda=(\lambda_1,\ldots,\lambda_l)$ if $w$ is annihilated by $u^+$ 
(the Lie algebra ${\mathrm{so}}_n({\mathbb{C}})$ acts on $V$ via the tangent representation of the given representation of $ SO_n({\mathbb{C}})$), and $t\cdot w=\lambda(t)w$ for all $t\in{\mathbb{T}}$. Such a vector generates an irreducible $ SO_n({\mathbb{C}},J)$-submodule in $W$ isomorphic to $W_{\lambda}$. We recall that there is a standard partial ordering of weights in representation theory: the weight $\alpha$ is greater than the weight $\beta$ if $\alpha-\beta$ is a sum of positive roots of the Lie algebra ${\mathrm{so}}_n({\mathbb{C}},J)$.  Now $\lambda$ is the unique maximal element (with respect to this partial ordering) among the weights of ${\mathbb{T}}$ that occur in $W_{\lambda}$.


\section{Spherical harmonics in the vanishing ideal of degenerate matrices} \label{sec:spherical} 

Denote by ${\mathcal{M}}_{{\mathbb{C}}}$ the space of $n\times n$ complex symmetric matrices.  As a module over 
$O_n({\mathbb{C}}) $ it decomposes as ${\mathcal{M}}_{{\mathbb{C}}}={\mathcal{N}}_{{\mathbb{C}}}\oplus{\mathbb{C}} I$, where ${\mathcal{N}}_{{\mathbb{C}}}$ is the subspace of trace zero $n\times n$ complex symmetric matrices. 
View ${\mathcal{N}}_{{\mathbb{C}}}$ as a complex affine algebraic variety with  coordinate ring 
${\mathbb{C}}[{\mathcal{N}}_{{\mathbb{C}}}]$. 
The same formulae as in (\ref{eq:H_i})  and 
(\ref{eq:T}) give an $O_n({\mathbb{C}}) $-equivariant polynomial map 
${\mathcal{T}}_{{\mathbb{C}}}:{\mathcal{N}}_{{\mathbb{C}}}\to\bigwedge^{n-1}{\mathcal{N}}_{{\mathbb{C}}}$, 
and we want to decompose the image of the dual of $\bigwedge^{n-1}{\mathcal{N}}_{{\mathbb{C}}}$ under the comorphism ${\mathcal{T}}_{{\mathbb{C}}}^\star:{\mathbb{C}}[\bigwedge^{n-1}{\mathcal{N}}_{{\mathbb{C}}}]\to {\mathbb{C}}[{\mathcal{N}}_{{\mathbb{C}}}]$ of ${\mathcal{T}}$. 
Clearly ${\mathcal{N}}_{{\mathbb{C}}}$ contains the subsets ${\mathcal{N}}\supset{\mathcal{F}}$. 
Denote by ${\mathcal{F}}_{{\mathbb{C}}}$ the closure of ${\mathcal{F}}$ in the Zariski topology of the complex affine space ${\mathcal{N}}_{{\mathbb{C}}}$, and denote by ${\mathcal{I}}({\mathcal{F}}_{{\mathbb{C}}})$ the vanishing ideal  
in ${\mathbb{C}}[{\mathcal{N}}_{{\mathbb{C}}}]$ of ${\mathcal{F}}_{{\mathbb{C}}}$.  Note that ${\mathcal{I}}({\mathcal{F}}_{{\mathbb{C}}})$ is spanned over ${\mathbb{C}}$ by its real subspace ${\mathcal{I}}({\mathcal{F}})$, and 
${\mathcal{I}}({\mathcal{F}}_{{\mathbb{C}}})\supset {\mathcal{T}}_{{\mathbb{C}}}^\star((\bigwedge^{n-1}{\mathcal{N}}_{{\mathbb{C}}})^\star)$ spanned over 
${\mathbb{C}}$ by ${\mathcal{T}}^\star((\bigwedge^{n-1}{\mathcal{N}})^\star)$. As explained in Section~\ref{sec:repsofonr}, 
the $ O_n$-module strucure of ${\mathcal{T}}^\star((\bigwedge^n{\mathcal{N}})^\star)$ and 
${\mathcal{I}}({\mathcal{F}})$ can be read off form the $O_n({\mathbb{C}}) $-module structure of 
${\mathcal{T}}_{{\mathbb{C}}}^\star((\bigwedge^{n-1}{\mathcal{N}}_{{\mathbb{C}}})^\star)$ and ${\mathcal{I}}({\mathcal{F}}_{{\mathbb{C}}})$, so from now on we shall focus on the complex objects. (Let us stress explicitly that 
${\mathcal{F}}_{{\mathbb{C}}}$ is properly contained in the set of all complex trace zero symmetric matrices with multiple eigenvalues; the latter is an irreducible complex hypersurface in  ${\mathcal{N}}_{{\mathbb{C}}}$, namely the set of all complex zeros of the discriminant, whereas ${\mathcal{F}}_{{\mathbb{C}}}$ is a codimension two complex algebraic subvariety of ${\mathcal{N}}_{{\mathbb{C}}}$.)  

As we indicated in Section~\ref{sec:repsofonr}, we change to the groups 
$O_n({\mathbb{C}},J)$ and $ SO_n({\mathbb{C}},J)$ preserving the symmetric bilinear form on ${\mathbb{C}}^n$ with matrix $J$.  
Accordingly, ${\mathcal{M}}_{{\mathbb{C}}}$ has to be replaced by the space 
\[{\mathcal{M}}_{{\mathbb{C}},J}:=\{A\in{\mathbb{C}}^{n\times n}\mid A^T=JAJ^{-1}\}\] 
of self-adjoint linear operators on $({\mathbb{C}}^n,J)$, on which 
$O_n({\mathbb{C}},J)$ acts by conjugation: for $g\in O_n({\mathbb{C}},J)$ and $A\in{\mathcal{M}}_{{\mathbb{C}},J}$ we have 
$g\cdot A=gAg^{-1}$ (matrix multiplication on the right hand side). 
The space 
${\mathcal{N}}_{{\mathbb{C}}}$ of trace zero symmetric matrices has to be replaced by 
\[{\mathcal{N}}_{{\mathbb{C}},J}:=\{A\in{\mathcal{M}}_{{\mathbb{C}},J}\mid {\mathrm{Tr}}(A)=0\}.\] 
Note that if $K$ is an $n\times n$ matrix with $J=K^TK$, then 
conjugation by $K^{-1}$ gives  isomorphisms 
$g\mapsto K^{-1}gK$, $O_n({\mathbb{C}}) \to O_n({\mathbb{C}},J)$ and $A\mapsto K^{-1}AK$, ${\mathcal{M}}_{{\mathbb{C}}}\to {\mathcal{M}}_{{\mathbb{C}},J}$ that intertwines the actions of the orthogonal groups. Taking this into account it is easy to see  that the $O_n({\mathbb{C}},J)$-equivariant polynomial map ${\mathcal{T}}_{{\mathbb{C}},J}:{\mathcal{N}}_{{\mathbb{C}},J}\to 
\bigwedge^{n-1}{\mathcal{N}}_{{\mathbb{C}},J}$  corresponding to ${\mathcal{T}}_{{\mathbb{C}}}$ is given by the same formulae  as in (\ref{eq:H_i}) and (\ref{eq:T}).  

\begin{proposition}\label{prop:highestweight} 
The $ SO_n({\mathbb{C}},J)$-module  ${\mathcal{T}}_{{\mathbb{C}},J}^\star((\bigwedge^{n-1}{\mathcal{N}}_{{\mathbb{C}},J})^\star)$ 
contains a summand isomorphic to $W_{(n)}$. 
\end{proposition}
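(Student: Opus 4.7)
The plan is to produce, by highest weight theory, a highest weight vector of weight $(n,0,\ldots,0)$ inside the image of $\cov_{\mc,J}^\star$. Using $\tzrsm_{\mc,J}\cong W_{(2)}$ via the traceless part of $\mathrm{S}^2(\mc^n)$ (as noted in the excerpt), fix a weight basis $\{B_{ij}\}$ with $B_{ij}$ corresponding to $e_i\cdot e_j$ and having weight $\mathrm{wt}(e_i)+\mathrm{wt}(e_j)$. Write $l=\lfloor n/2\rfloor$ and set
$$\omega\;:=\;\bigwedge_{k\in\{1,\ldots,n\}\setminus\{l+1\}}B_{1,k}\;\in\;\bigwedge^{n-1}\tzrsm_{\mc,J},$$
the wedge of all $B_{1,k}$ of nonzero weight. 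Summing weights and using $(\epsilon_1+\epsilon_j)+(\epsilon_1-\epsilon_j)=2\epsilon_1$ for each $j=2,\ldots,l$ gives $\mathrm{wt}(\omega)=n\epsilon_1$.

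A short counting argument then shows that $\omega$ is automatically a highest weight vector. Every weight of $\tzrsm_{\mc,J}$ has first coordinate in $\{-2,-1,0,1,2\}$, and the maximum first-coordinate sum of $n-1$ distinct basis vectors is $2+(n-2)\cdot 1=n$, uniquely realized by the choice defining $\omega$; thus the weight-$n\epsilon_1$ subspace of $\bigwedge^{n-1}\tzrsm_{\mc,J}$ is one-dimensional. Moreover, for every positive root $\alpha$ of $\mathfrak{so}_n(\mc,J)$, the weight $n\epsilon_1+\alpha$ does not occur (either the first coordinate becomes $n+1>n$, or attaining first coordinate $n$ already forces every other coordinate to $0$, contradicting $\alpha\neq 0$). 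So $\omega$ is annihilated by $\mathfrak{u}^+$ and generates a submodule isomorphic to $W_{(n)}$. The $SO_n(\mc,J)$-invariant weight-preserving trace pairing on $\tzrsm_{\mc,J}$ transfers this to a highest weight vector $\omega^\star$ of weight $(n)$ generating a $W_{(n)}$-summand of $(\bigwedge^{n-1}\tzrsm_{\mc,J})^\star$.

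It remains to show $\cov_{\mc,J}^\star(\omega^\star)\neq 0$; irreducibility of $W_{(n)}$ and $SO_n(\mc,J)$-equivariance then force $\cov_{\mc,J}^\star$ to map this summand injectively, so the image contains a $W_{(n)}$-summand. Under the pairing,
$$\cov_{\mc,J}^\star(\omega^\star)(A)=\pm\det\bigl(\mathrm{Tr}(B_{1,k_i}H_j(A))\bigr)_{i,j=1}^{n-1},$$
with $H_1(A)=A$ and $H_j(A)=A^j-\tfrac{1}{n}\mathrm{Tr}(A^j)I$. This nonvanishing is the main obstacle: weight theory alone does not rule out that $\cov_{\mc,J}^\star$ kills our $W_{(n)}$-summand. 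I would verify it by evaluating at $A=K^{-1}DK\in\tzrsm_{\mc,J}$, where $D=\mathrm{diag}(\lambda_1,\ldots,\lambda_n)$ is a real trace-zero matrix with distinct eigenvalues and $K^TK=J$; using $O_n(\mc)$-equivariance to pass to the standard eigenbasis, $\cov(D)$ lies in the one-dimensional $\bigwedge^{n-1}$ of the trace-zero diagonal subspace of $\rsm_{\mc}$ and equals the top wedge multiplied by a nonzero Vandermonde-type expression in the $\lambda_i$'s (as in the proof of Proposition~\ref{prop:cov}), so the pairing against $\omega^\star$ reduces to a single combinatorial factor measuring how the $K$-conjugate of that top wedge sits in the $\omega$-direction---which can be checked nonzero by a direct calculation in a fixed basis.
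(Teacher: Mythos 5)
Your overall strategy coincides with the paper's: exhibit a highest weight vector of weight $(n)$ in $\bigwedge^{n-1}\tzrsm_{\mc,J}^\star\cong(\bigwedge^{n-1}\tzrsm_{\mc,J})^\star$ by wedging the ``$e_1$-row'' of weight vectors, and then show $\cov_{\mc,J}^\star$ does not kill it. Your weight computation and the argument that $n\epsilon_1$ is extremal (so $\omega$ is automatically a highest weight vector) are correct and essentially the paper's, just phrased in the $\mathrm{S}^2(\mc^n)$ picture rather than in the matrix coordinates $x_{j1}$.

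Two issues. The first is minor: the trace pairing $(A,B)\mapsto\mathrm{Tr}(AB)$ is $SO_n(\mc,J)$-invariant but \emph{not} weight-preserving --- it pairs a weight-$\mu$ vector nontrivially only with weight-$(-\mu)$ vectors, so the induced isomorphism $\bigwedge^{n-1}\tzrsm_{\mc,J}\to(\bigwedge^{n-1}\tzrsm_{\mc,J})^\star$ sends your highest weight vector $\omega$ to a \emph{lowest} weight vector of weight $-n\epsilon_1$. The conclusion that $(\bigwedge^{n-1}\tzrsm_{\mc,J})^\star$ contains a $W_{(n)}$-summand still follows (the module is self-dual), but the sentence as written is wrong and should be fixed, e.g.\ by passing to the highest weight vector of the same $W_{(n)}$-summand.

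The second issue is the real gap. You correctly flag the nonvanishing $\cov_{\mc,J}^\star(\omega^\star)\neq 0$ as the crux, but you do not establish it; the final paragraph only outlines a computation and asserts without proof that a ``single combinatorial factor'' comes out nonzero. That factor is exactly where all the content lies, and your proposed evaluation point $K^{-1}DK$ makes it unnecessarily hard to see: after conjugating by $K^{-1}$, $\cov_{\mc}(D)$ sits in the top wedge of the diagonal trace-zero matrices, and expanding that wedge in the $B_{jk}$ (equivalently $x_{ij}$) basis of $\tzrsm_{\mc,J}$ and extracting the coefficient of $\omega$ requires tracking the full $K$-conjugation, with no a priori reason for the coefficient to be nonzero. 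The paper sidesteps this entirely: with the identification $\bigwedge^{n-1}(\tzrsm_{\mc,J}^\star)\cong(\bigwedge^{n-1}\tzrsm_{\mc,J})^\star$, the value of $\cov_{\mc,J}^\star(x_{2,1}\wedge\cdots\wedge x_{n,1})$ on $A$ is the determinant of the $(n-1)\times(n-1)$ matrix whose $j$th column is the first column (with the $(1,1)$-entry deleted) of $A^j$, and evaluating at the explicit cyclic permutation matrix $A\in\tzrsm_{\mc,J}$ (the cycle $e_1\mapsto e_{l+1}\mapsto\cdots\mapsto e_n\mapsto e_l\mapsto\cdots\mapsto e_2\mapsto e_1$) makes those $n-1$ columns exactly the standard basis of $\mc^{n-1}$, giving determinant $\pm1$. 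You should replace your final paragraph with an explicit evaluation of this sort; as it stands the proposal does not prove the proposition.
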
 

\begin{proof} Denote $x_{ij}$ the function on ${\mathcal{N}}_{{\mathbb{C}},J}$ mapping an $n\times n$ matrix in 
${\mathcal{N}}_{{\mathbb{C}},J}$ to its $(i,j)$-entry. For a diagonal matrix $t\in{\mathbb{T}}$ (cf. Section~\ref{sec:repsofonr}) we have that $t\cdot x_{ij}$ is the $(i,j)$-entry of $t^{-1}(x_{ij})_{i,j=1}^nt$ (matrix multiplication). 
In particular, all the $x_{ij}$ are weight vectors in ${\mathcal{N}}_{{\mathbb{C}},J}^\star$  (and $x_{l+1,1}$ is the unique highest weight vector in ${\mathcal{N}}_{{\mathbb{C}},J}^\star$, it has weight $(2)$).  

Consequently, 
\[x:=x_{2,1}\wedge x_{3,1}\wedge \cdots \wedge x_{n,1}\in \bigwedge^{n-1}{\mathcal{N}}_{{\mathbb{C}},J}^\star\] 
is a weight vector, and one computes easily that its weight is $(n)$. 
 Moreover, it is a highest weight vector, since its weight is maximal with respect to the lexicographic ordering among the weights that occur in $\bigwedge^{n-1}{\mathcal{N}}_{{\mathbb{C}},J}^\star$ 
 (one can easily see that for all other weights $\alpha=(\alpha_1,\ldots,\alpha_l)$ in $\bigwedge^{n-1}{\mathcal{N}}_{{\mathbb{C}},J}^\star$ we have $\alpha_1<n$), hence it is also maximal with respect to the standard partial ordering of weights mentioned in the end of Section~\ref{sec:repsofonr}. 

Use the usual identification $\bigwedge^{n-1}({\mathcal{N}}_{{\mathbb{C}},J}^\star)\cong 
(\bigwedge^{n-1}{\mathcal{N}}_{{\mathbb{C}},J})^\star$: for $x_1,\ldots,x_{n-1}$ in ${\mathcal{N}}^\star_{{\mathbb{C}},J}$ and 
$A_1,\ldots,A_{n-1}$ in ${\mathcal{N}}_{{\mathbb{C}},J}$, the value of  $x_1\wedge \cdots \wedge x_{n-1}$ (viewed as a linear form on $\bigwedge^{n-1}{\mathcal{N}}_{{\mathbb{C}},J}$) at  
$A_1\wedge \cdots \wedge A_{n-1}$ equals 
\[\sum_{\pi\in {\mathrm{Sym}}(n-1)}{\mathrm{sign}}(\pi)x_1(A_{\pi(1)})\cdots 
x_{n-1}(A_{\pi(n-1)})\] 
(where the summation ranges over the full symmetric group ${\mathrm{Sym}}(n-1)$ of degree $n-1$). 
In particular, this means that the value of ${\mathcal{T}}_{{\mathbb{C}},J}^\star(x)$ on $A\in{\mathcal{N}}_{{\mathbb{C}},J}$ equals 
the determinant of the $(n-1)\times (n-1)$ matrix, whose $i$th column is  the first column (with the $(1,1)$-entry removed) of $A^i$. 
Now take for $A$ the matrix of the linear transformation permuting the standard basis vectors  $e_1,\ldots,e_n\in{\mathbb{C}}^n$ cyclically as follows: 
\[e_1\mapsto e_{l+1}\mapsto e_{l+2}\mapsto\cdots\mapsto e_n\mapsto e_l\mapsto e_{l-1}\mapsto 
\ldots\mapsto e_2\mapsto e_1\] 
(for $n=4$ the matrix $A$ is displayed in the proof of Theorem~\ref{thm:n=4}). 
It is easy to see that $A$ belongs to ${\mathcal{N}}_{{\mathbb{C}},J}$. The first columns (with the first entry removed) of the first $n-1$ powers of 
$A$  exhaust the set of standard basis vectors in ${\mathbb{C}}^{n-1}$, showing that 
${\mathcal{T}}_{{\mathbb{C}},J}^\star(x)(A)\neq 0$. 
Consequently, ${\mathcal{T}}_{{\mathbb{C}},J}^\star(x)$ is non-zero, and so it is a highest weight vector of weight 
$(n)$, generating an $ SO_n({\mathbb{C}},J)$-submodule  isomorphic to $W_{(n)}$. 
\end{proof} 

\begin{theorem}\label{thm:spherical} 
The degree $n(n-1)/2$ homogeneous component of  the vanishing ideal 
${\mathcal{I}}({\mathcal{F}})$ of degenerate trace zero symmetric $n\times n$ real matrices contains an $ SO_n$-submodule isomorphic to ${\mathcal{H}}^n({\mathbb{R}}^n)$, the space of $n$-variable spherical harmonics 
of degree $n$. 
Consequently, the discriminant of $n\times n$ symmetric matrices can be written as the sum of 
$\binom{2n-1}{n-1}-\binom{2n-3}{n-1}$ squares. 
\end{theorem}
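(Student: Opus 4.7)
The plan is to obtain this as a corollary of Proposition~\ref{prop:highestweight}, the real/complex dictionary from Section~\ref{sec:repsofonr}, and Lemma~\ref{lemma:irred}; essentially all the substantive work has been done in the construction of $\cov^\star$ and the highest-weight computation.

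First, I would transfer Proposition~\ref{prop:highestweight} from the quadratic form $J$ back to the standard one: conjugation by a matrix $K$ with $J=K^TK$ provides an $O_n(\mc)$-equivariant isomorphism $\tzrsm_{\mc}\to\tzrsm_{\mc,J}$ that intertwines $\cov_{\mc}$ and $\cov_{\mc,J}$, so the image of $\cov_{\mc}^\star$ contains an $SO_n(\mc)$-submodule isomorphic to $W_{(n)}$. This submodule lies in $\ideal(\tzdsm_{\mc})_{n(n-1)/2}$ by the discussion at the end of Section~\ref{sec:cov}.

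Next I would descend to the real form. As explained in Section~\ref{sec:repsofonr}, the multiplicity of an irreducible $SO_n$-module $W_{\lambda}$ in a real representation equals that of its complexification in the complexified representation; for the particular partition $\lambda=(n)$ we have $h_1(\lambda)=1$, which is $<n/2$ when $n\geq 3$ and falls into the $n=4m+2$ case (with $m=0$) when $n=2$, so in either case no splitting ambiguity arises and the restriction of $V_{(n)}$ to $SO_n$ is the single irreducible $W_{(n)}$. Combined with the fact that $\ideal(\tzdsm_{\mc})$ is spanned over $\mc$ by $\ideal(\tzdsm)$ and that $\cov_{\mc}^\star$ is the complexification of $\cov^\star$, we conclude that $\ideal(\tzdsm)_{n(n-1)/2}$ contains an $SO_n$-submodule isomorphic to $W_{(n)}$. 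Since $V_{(n)}\cong\harmonics^n(\mr^n)$ by the description in Section~\ref{sec:repsofonr}, this proves the first assertion.

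For the sum-of-squares count I would apply Lemma~\ref{lemma:irred} to this submodule: it produces a basis $\{f_i\}$ with $\delta=\sum_i f_i^2$, and the number of summands equals $\dim(\harmonics^n(\mr^n))$. The standard construction of spherical harmonics as the quotient of the degree $n$ homogeneous component of $\mr[x_1,\ldots,x_n]$ by its intersection with the ideal $(x_1^2+\cdots+x_n^2)$ yields
$$\dim(\harmonics^n(\mr^n))=\binom{2n-1}{n}-\binom{2n-3}{n-2}=\binom{2n-1}{n-1}-\binom{2n-3}{n-1},$$
finishing the proof. I do not anticipate a real obstacle: the genuine content is already encapsulated in Proposition~\ref{prop:highestweight}, and the remaining work is bookkeeping between the real and complex forms of the orthogonal group together with a routine binomial computation.
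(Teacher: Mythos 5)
Your proposal is correct and matches the paper's own proof, which simply cites Proposition~\ref{prop:highestweight}, the real/complex dictionary of Section~\ref{sec:repsofonr}, and Lemma~\ref{lemma:irred}; you have merely expanded those citations into explicit steps. One small caveat: for $n=2$ you write that ``no splitting ambiguity arises,'' but in fact the \emph{complexification} of the real irreducible $W_{(2)}$ does split into two complex one-dimensional pieces (this is precisely the $n=4m+2$, $h_1(\lambda)=n/2$ exception the paper records); the conclusion still holds because the two pieces are complex-conjugate and so lie over a single real $2$-dimensional $SO_2$-submodule, but your stated reason is slightly off even though the paper glosses over the same point.
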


\begin{proof} As explained in Section~\ref{sec:repsofonr} and in the beginning of 
Section~\ref{sec:spherical}, Proposition~\ref{prop:highestweight} 
implies the first statement. 
The second statement follows by Lemma~\ref{lemma:irred}.  
\end{proof} 

\begin{remark} Based on Kummer's result $\mu(3)\leq 7=\dim({\mathcal{H}}^3({\mathbb{R}}^3))$, 
Peter Lax surmized the inequality 
$\mu(n)\leq  \binom{2n-1}{n-1}-\binom{2n-3}{n-1}$ in his letter \cite{lax:2009} to the author.   
This is a drastic improvement compared to the general upper bounds for 
$\mu(n)$ appearing in prior work known to us (cf. Remark~\ref{remark:tracezero}). 
On the other hand, 
this inequality is not always sharp (see Sections~\ref{sec:n=3} and  \ref{sec:n=4}).  
\end{remark}


\section{The case $n=3$}\label{sec:n=3}

\begin{proposition}\label{prop:idealcubic} 
For $n=3$, the degree three homogeneous component ${\mathcal{I}}({\mathcal{F}})_3$ of the vanishing ideal of degenerate trace zero symmetric matrices is isomorphic to the seven dimensional irreducible $SO_3$-module ${\mathcal{H}}^3({\mathbb{R}}^3)$, and coincides with the image under the map ${\mathcal{T}}^\star$ of $(\bigwedge^2{\mathcal{N}})^\star$.  
\end{proposition}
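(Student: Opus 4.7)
The plan is to combine the Clebsch--Gordan and plethysm decompositions of the relevant $SO_3$-modules with Proposition~\ref{prop:highestweight} to identify $\cov^\star((\bigwedge^{2}\tzrsm)^\star)$, and then to rule out all other $SO_3$-isotypic components of $\mr[\tzrsm]_3$ from membership in $\ideal(\tzdsm)_3$.

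First I would identify the image of $\cov^\star$. Since $\tzrsm\cong W_{(2)}$ (the unique five-dimensional irreducible $SO_3$-module), standard Clebsch--Gordan and plethysm computations yield $\bigwedge^2\tzrsm\cong W_{(1)}\oplus W_{(3)}$ and $\mr[\tzrsm]_3\cong S^3 W_{(2)}\cong W_{(0)}\oplus W_{(2)}\oplus W_{(3)}\oplus W_{(4)}\oplus W_{(6)}$ (of total dimension $1+5+7+9+13=35$). The equivariant map $\cov^\star$ annihilates the $W_{(1)}$ summand by Schur's lemma (since $W_{(1)}$ is absent from the target), and by Proposition~\ref{prop:highestweight} it is nonzero on the $W_{(3)}$ summand, hence sends $W_{(3)}$ isomorphically onto the unique $W_{(3)}$-component of $\mr[\tzrsm]_3$. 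Therefore $\cov^\star((\bigwedge^{2}\tzrsm)^\star)$ is a copy of $W_{(3)}\cong\harmonics^3(\mr^3)$ contained in $\ideal(\tzdsm)_3$.

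Next I would show $\ideal(\tzdsm)_3$ has no further $SO_3$-components. As each irreducible occurs with multiplicity one in $\mr[\tzrsm]_3$, it suffices to exhibit, for each $d\in\{0,2,4,6\}$, a polynomial in the $W_{(d)}$-isotypic component that is nonzero on some element of $\tzdsm$; all checks can be carried out at $A_0:={\mathrm{diag}}(1,1,-2)\in\tzdsm$. The invariant ${\mathrm{Tr}}(A^3)$ takes the value $-6$ at $A_0$ (handling $d=0$); direct computation gives $H_3(A_0)=3A_0\neq 0$, so any $\xi\in\tzrsm^\star$ with $\xi(A_0)\neq 0$ furnishes a nonzero $\xi\circ H_3$ in the $W_{(2)}$-component (handling $d=2$). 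For $d=4,6$, I would write down explicit highest-weight vectors of weight $4$ and $6$ in $S^3\tzrsm^\star$ (in the spirit of the proof of Proposition~\ref{prop:highestweight}) and evaluate them on $A_0$ or on a suitable rotate. Together this forces $\ideal(\tzdsm)_3=W_{(3)}=\cov^\star((\bigwedge^{2}\tzrsm)^\star)$.

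The main obstacle is the explicit non-vanishing verification for the $W_{(4)}$ and $W_{(6)}$ components. A uniform alternative is to use the $SO_3$-equivariant $2$-to-$1$ surjection $\pi:\mr\times S^2\to\tzdsm$, $(a,u)\mapsto a(I-3uu^T)$: the pullback $\pi^*(f)(a,u)=a^3\,f(I-3uu^T)$ of any $f\in\mr[\tzrsm]_3$ lies in $a^3\cdot(W_{(0)}\oplus W_{(2)}\oplus W_{(4)}\oplus W_{(6)})$ inside $\mr[\mr\times S^2]^{\mz/2}$, a $28$-dimensional subspace; surjectivity of $\pi^*$ onto this space (verified isotypic component by isotypic component as above, using e.g.\ ${\mathrm{Tr}}((I-3uu^T)^3)=-6$ and $H_3(I-3uu^T)=3(I-3uu^T)$) then forces $\dim\ideal(\tzdsm)_3=35-28=7$, matching $\dim W_{(3)}$.
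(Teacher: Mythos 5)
Your proposal is correct, and the two halves compare with the paper's proof differently. For the first half, you use the same decompositions $\bigwedge^2\tzrsm\cong W_{(1)}\oplus W_{(3)}$ and $S^3W_{(2)}\cong W_{(0)}\oplus W_{(2)}\oplus W_{(3)}\oplus W_{(4)}\oplus W_{(6)}$, but you kill the $W_{(1)}$ summand by the simple observation that $W_{(1)}$ is absent from the target $\mr[\tzrsm]_3$ (Schur), whereas the paper appeals to the map $\kappa$ from Section~\ref{sec:kercovstar}; your route is a little cleaner.

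For the second half -- ruling the components $W_{(0)},W_{(2)},W_{(4)},W_{(6)}$ out of $\ideal(\tzdsm)_3$ -- the paper passes to the $J$-form, writes out all five highest weight vectors of $S^3(\tzrsm_{\mc,J}^\star)$ explicitly, and evaluates them at a single degenerate matrix. Your ``uniform alternative'' instead parametrizes $\tzdsm$ by $\pi:\mr\times S^2\to\tzdsm$, $(a,u)\mapsto a(I-3uu^T)$, so that $\ideal(\tzdsm)_3=\ker\bigl(\pi^*|_{\mr[\tzrsm]_3}\bigr)$, and observes that $\pi^*(f)(a,u)=a^3 f(I-3uu^T)$ lands in $a^3\cdot(\harmonics^0\oplus\harmonics^2\oplus\harmonics^4\oplus\harmonics^6)$ by parity. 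This framing is conceptually nicer: the parity argument explains a priori why $W_{(3)}$ (and only $W_{(3)}$) can lie in the kernel, and surjectivity onto the $28$-dimensional target then yields $\dim\ideal(\tzdsm)_3=7$, pinning it down to the copy of $W_{(3)}$ already known to sit inside by Proposition~\ref{prop:highestweight}, which is exactly $\cov^\star((\bigwedge^2\tzrsm)^\star)$. However, the surjectivity verification is not actually uniform -- it still has to be checked one isotypic component at a time, and you only carry out $d=0$ ($\mathrm{Tr}((I-3uu^T)^3)=-6$) and $d=2$ ($H_3(I-3uu^T)=3(I-3uu^T)$), leaving $d=4,6$ at the same ``write a highest weight vector and evaluate'' level of effort that the paper undertakes. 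So the total work is comparable; what you gain is a more transparent explanation of the answer. One small inaccuracy: $\pi$ is not $2$-to-$1$ everywhere (it collapses $\{0\}\times S^2$ to the origin), but only surjectivity of $\pi$ is used, so this does not affect the argument.
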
 

\begin{proof} 
Set $K:=\left(\begin{array}{ccc}\frac{1}{\sqrt 2} & \frac{1}{\sqrt 2} & 0 \\\frac{{\mathrm{i}}}{\sqrt 2} & 
\frac{-{\mathrm{i}}}{\sqrt 2} & 0 \\0 & 0 & 1\end{array}\right)$. 
Then we have $K^TK=J$. 
As explained in Sections~\ref{sec:repsofonr} and \ref{sec:spherical}, it is sufficient to prove that the degree three homogeneous component of ${\mathcal{I}}({\mathcal{F}}_{{\mathbb{C}},J})$ is isomorphic as an $SO_3({\mathbb{C}},J)$-module  to $W_{(3)}$, and coincides with ${\mathcal{T}}_{{\mathbb{C}},J}^\star((\bigwedge^2{\mathcal{N}}_{{\mathbb{C}}})^\star)$, 
where 
\[{\mathcal{F}}_{{\mathbb{C}},J}:=\{K^{-1}AK\mid A\in{\mathcal{F}}_{{\mathbb{C}}}\}\] 
(so ${\mathcal{F}}_{{\mathbb{C}},J}$ is the Zariski closure in ${\mathcal{N}}_{{\mathbb{C}},J}$ of the subset 
$K^{-1}\cdot{\mathcal{F}}\cdot K$). 
The character of the $SO_3({\mathbb{C}},J)$-module ${\mathcal{N}}_{{\mathbb{C}},J}$ (i.e. the trace of the group element 
${\mathrm{diag}}(t,t^{-1},1)$ as a linear operator on ${\mathcal{N}}_{{\mathbb{C}},J}$) equals 
$t^2+t+1+t^{-1}+t^{-2}$, hence the character of $\bigwedge^2{\mathcal{N}}_{{\mathbb{C}},J}$ is 
\[t^3+t^2+2t+2+2t^{-1}+t^{-2}+t^{-3}.\] 
Since the character of $W_{(d)}$ is $\sum_{j=-d}^d t^j$, we conclude that 
\[(\bigwedge^2{\mathcal{N}}_{{\mathbb{C}},J})^\star\cong\bigwedge^2{\mathcal{N}}_{{\mathbb{C}},J}\cong W_{(1)} + W_{(3)}.\] 
The first summand is the defining representation of $SO_3({\mathbb{C}},J)$ on ${\mathbb{C}}^3$, 
and it is isomorphic to the adjoint representation on ${\mathrm{so}}_3({\mathbb{C}},J)$. It follows from the considerations in Section~\ref{sec:kercovstar} that the kernel of the map $\kappa$ 
(defined in Section~\ref{sec:kercovstar}) is isomorphic to the irreducible $SO_3({\mathbb{C}},J)$-module $W_{(3)}$, hence 
${\mathcal{T}}_{{\mathbb{C}},J}^\star((\bigwedge^2{\mathcal{N}}_{{\mathbb{C}},J})^\star)\cong W_{(3)}$ (say by 
the special case $n=3$ of Theorem~\ref{thm:spherical}). 

The degree three homogeneous component of ${\mathbb{C}}[{\mathcal{N}}_{{\mathbb{C}},J}]$ is isomorphic as an 
$SO_3({\mathbb{C}},J)$-module to the third symmetric tensor power ${\mathrm{S}}^3(W_{(2)})$.
Calculating its character  
we get 
\[{\mathbb{C}}[{\mathcal{N}}_{{\mathbb{C}},J}]_3\cong W_{(6)}+ W_{(4)}+W_{(3)}+ W_{(2)}+W_{(0)}.\] 
Next we determine the highest weight vectors of the irreducible summands in the above decomposition. The unipotent radical of the positive Borel subalgebra of the Lie algebra 
${\mathrm{so}}_3({\mathbb{C}},J)$ is one-dimensional spanned by $E:=E_{13}-E_{32}$, where $E_{ij}$ stands for  the $3\times 3$ matrix unit whose only non-zero entry is a $1$ in the $(i,j)$-position 
(see for example Section 10.4.1 in \cite{procesi}).  
The following table gives the effect of $E$ on the basis elements $x_{ij}\in{\mathcal{N}}_{{\mathbb{C}},J}^\star$ 
(the coordinate function 
mapping a matrix in ${\mathcal{N}}_{{\mathbb{C}},J}$ to its $(i,j)$-entry), as well as the weights of the $x_{ij}$. 
To compute it note that the action of the Lie algebra ${\mathrm{so}}_3({\mathbb{C}},J)$ is the following: 
$A\in{\mathrm{so}}_3({\mathbb{C}},J)$ maps $B\in{\mathcal{N}}_{{\mathbb{C}},J}$ to $[A,B]:=AB-BA$ (matrix multiplication on the right hand side). Consequently, $E$ sends $x_{ij}\in{\mathcal{N}}_{{\mathbb{C}},J}^\star$  to the $(i,j)$-entry of 
$[(x_{ij})_{i,j=1}^3,E]$. The matrix 
$H:=E_{11}-E_{22}$ spans the Cartan subalgebra of ${\mathrm{so}}_3({\mathbb{C}},J)$, and 
the weight of $x_{ij}$ is $k\in{\mathbb{Z}}$ if $H$ maps $x_{ij}$ to $kx_{ij}$. 
Note that we have the following linear relations in ${\mathcal{N}}_{{\mathbb{C}},J}^\star$: 
$x_{22}=x_{11}$, $x_{33}=-2x_{11}$, $x_{13}=x_{32}$, $x_{23}=x_{31}$. From this information one easily works out the following table: 

\begin{center}
\begin{tabular}{c||c|c|c|c|c}
$x\in {\mathcal{N}}_{{\mathbb{C}},J}^\star$ & $x_{21}$ & $x_{11}$ & $x_{31}$ & $x_{12}$ & $x_{32}$  \\
\hline 
weight of $x$ & $2$ & $0$ & $1$ & $-2$ & $-1$ 
\\ \hline 
$E(x)$ & $0$ & $-x_{31}$ & $x_{21}$ & $-2 x_{32}$ & $3 x_{11}$ 
\end{tabular}
\end{center}

The coefficient of $t^2$ in the character of ${\mathrm{S}}^3({\mathcal{N}}_{{\mathbb{C}},J}^\star)$ is $4$, hence 
the weight subspace of weight $2$ in ${\mathrm{S}}^3({\mathcal{N}}_{{\mathbb{C}},J}^\star)$ is $4$-dimensional. 
From the above table one easily sees that 
$x_{21}x_{11}^2, x_{21}x_{31}x_{32},x_{21}^2x_{12}, x_{11}x_{31}^2$ is a basis of this weight space, and computes the images under $E$ of these weight vectors. For example, 
$E(x_{21}x_{11}^2)=E(x_{21})x_{11}^2+x_{21}E(x_{11})x_{11}+x_{21}x_{11}E(x_{11})=
-2x_{21}x_{11}x_{31}$. 
Solving a system of linear equations  one gets explicitly the elements in this weight space annihilated by 
$E\in{\mathrm{so}}_3({\mathbb{C}},J)$. 
It turns out that (up to non-zero scalar multiples) there is one highest weight vector of weight $2$ in ${\mathrm{S}}^3({\mathcal{N}}_{J,{\mathbb{C}}}^\star)$. One finds similarly all the highest wight vectors in the degree three homogeneous component of the coordinate ring of ${\mathcal{N}}_{{\mathbb{C}},J}$, they are listed in the following  table (the highest weight vector of weight $0$ is a scalar multiple of the function 
$A\mapsto {\mathrm{Tr}}(A^3)$). 

\begin{center}
\begin{tabular}{c|c} 
weight & highest weight vector in ${\mathrm{S}}^3({\mathcal{N}}_{{\mathbb{C}},J}^\star)$ \\ \hline \hline 
$6$ & $x_{21}^3$ \\ \hline 
$4$ & $2x_{21}^2x_{11}+x_{21}x_{31}^2$ \\ \hline 
$3$ & $x_{31}^3+3x_{21}x_{31}x_{11}-x_{21}^2x_{32}$ \\ \hline 
$2$ & $3x_{21}x_{11}^2+2x_{21}x_{31}x_{32}+x_{21}^2x_{12}$ \\ \hline 
$0$ & $-2x_{11}^3+2x_{11}x_{12}x_{21}-2x_{11}x_{32}x_{31}+x_{12}x_{31}^2+x_{32}^2x_{21}$
\end{tabular}
\end{center}

The diagonal matrix ${\mathrm{diag}}(2,-4,2)$ belongs to ${\mathcal{F}}$, hence 
\[K^{-1}\cdot {\mathrm{diag}}(2,-4,2)\cdot K=\left(\begin{array}{ccc}-1 & 3 & 0 \\ 3 & -1 & 0 \\ 0 & 0 & 2 \end{array}\right)\] belongs to ${\mathcal{F}}_{{\mathbb{C}},J}$. 
Direct computation shows that  there is only one polynomial in the second table vanishing on this matrix, namely the highest weight vector with weight $3$. 
Consequently, ${\mathcal{I}}({\mathcal{F}}_{{\mathbb{C}},J})_3\cong W_{(3)}$ as $SO_3({\mathbb{C}},J)$-modules. 
\end{proof}

\begin{proposition}\label{prop:multiplication}
Let $L$ denote the linear map from the symmetric tensor square of ${\mathcal{I}}({\mathcal{F}})_3$ into 
${\mathbb{R}}[{\mathcal{N}}]_6$ induced by the multiplication map 
${\mathcal{I}}({\mathcal{F}})_3\times{\mathcal{I}}({\mathcal{F}})_3\to {\mathbb{R}}[{\mathcal{N}}]_6$, $(f_1,f_2)\mapsto f_1f_2$. 
Then the kernel of $L$ is $5$-dimensional, and is isomorphic to the $SO_3$-module 
$W_{(2)}\cong {\mathcal{H}}^2({\mathbb{R}}^3)\cong {\mathcal{N}}$. 
\end{proposition}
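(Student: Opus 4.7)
The plan is to complexify, as justified in Section~\ref{sec:repsofonr}, and analyze the analogous $SO_3(\mc,J)$-equivariant multiplication map $L_{\mc,J}\colon\mathrm{S}^2(\ideal(\tzdsm_{\mc,J})_3)\to\mc[\tzrsm_{\mc,J}]_6$. By Proposition~\ref{prop:idealcubic}, $\ideal(\tzdsm_{\mc,J})_3\cong W_{(3)}$, and the standard Clebsch--Gordan decomposition for $SO_3(\mc,J)$ yields the multiplicity-free splitting
\[
\mathrm{S}^2(W_{(3)})\cong W_{(6)}\oplus W_{(4)}\oplus W_{(2)}\oplus W_{(0)}
\]
of total dimension $13+9+5+1=28$. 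Since every summand occurs with multiplicity one, Schur's lemma forces $L_{\mc,J}$ to be either zero or injective on each summand, so the kernel is a direct sum of some subset of the four irreducibles. It therefore suffices to decide each summand individually.

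I would dispatch three of the four summands by exhibiting nonzero images. The $W_{(0)}$-line maps to a nonzero scalar multiple of $\delta$, as in Lemma~\ref{lemma:irred}. The highest weight vector of the $W_{(6)}$-summand is the symmetric tensor square of the explicit weight-$3$ vector $f=x_{31}^3+3x_{21}x_{31}x_{11}-x_{21}^2 x_{32}$ from the proof of Proposition~\ref{prop:idealcubic}, and its image is the nonzero polynomial $f^2\in\mc[\tzrsm_{\mc,J}]_6$. For the $W_{(4)}$-case I would first construct the weight basis $\{v_3=f,v_2,\ldots,v_{-3}\}$ of $\ideal(\tzdsm_{\mc,J})_3\cong W_{(3)}$ by iteratively applying the lowering operator of $\mathrm{so}_3(\mc,J)$ to $f$ (computable from the $E$-action table in Proposition~\ref{prop:idealcubic}), then find the unique highest weight vector of weight $4$ in the two-dimensional subspace $\langle v_3 v_1,v_2^2\rangle$ of $\mathrm{S}^2(W_{(3)})$ by solving one linear equation $E(\alpha v_3 v_1+\beta v_2^2)=0$, and finally evaluate the resulting polynomial at a test matrix such as $K^{-1}\mathrm{diag}(2,-4,2)K$; a nonzero value rules $W_{(4)}$ out of the kernel.

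At this stage the kernel is either $0$ or $W_{(2)}$, so it remains to exhibit a single nonzero highest weight vector of weight $2$ whose image under $L_{\mc,J}$ is zero. The weight-$2$ subspace of $\mathrm{S}^2(W_{(3)})$ is three-dimensional with basis $\{v_3 v_{-1},v_2 v_0,v_1^2\}$, and a small linear system---annihilation by $E$ into the two-dimensional weight-$3$ subspace $\langle v_3 v_0,v_2 v_1\rangle$---pins down the unique line of highest weight vectors, which must be the $W_{(2)}$-copy. I would then verify by direct polynomial expansion in the five coordinates on $\tzrsm_{\mc,J}$ that this line maps to the zero polynomial. The main obstacle is this identical vanishing verification: unlike the earlier nonvanishing checks, it cannot be reduced to evaluation at a single test point but must be confirmed as a polynomial identity, most efficiently by a careful calculation using the weight basis already in hand or by a short computer algebra check.
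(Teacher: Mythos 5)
Your overall strategy is the same as the paper's: complexify, use the Clebsch–Gordan decomposition ${\mathrm{S}}^2(W_{(3)})\cong W_{(6)}\oplus W_{(4)}\oplus W_{(2)}\oplus W_{(0)}$, invoke multiplicity-freeness so that the kernel is a sum of whole summands, and decide each summand by computing the image of its highest weight vector. This is precisely what Proposition~\ref{prop:multiplication}'s proof does, also building the weight basis of $W_{(3)}$ via the lowering operator $F$ starting from the explicit weight-$3$ vector $f$ and writing down the same highest weight vectors $w_{(6)}=y_3^2$, $w_{(4)}=2y_3y_1-y_2^2$, $w_{(2)}=2y_3y_{-1}-2y_2y_0+y_1^2$.

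There is, however, one concrete flaw in your plan: to rule $W_{(4)}$ out of the kernel you propose to evaluate the image polynomial at $K^{-1}\mathrm{diag}(2,-4,2)K$. That matrix lies in $\tzdsm_{\mc,J}$, and every polynomial in the image of $L_{\mc,J}$ lies in $\ideal(\tzdsm_{\mc,J})_6$, hence vanishes there identically. Your test would therefore return $0$ no matter what, and you would wrongly conclude $W_{(4)}\subseteq\ker L_{\mc,J}$, overcounting the kernel. The fix is simple: evaluate at a nondegenerate trace-zero matrix (or, as the paper does, expand the cubic polynomials $\iota(y_k)$ explicitly and check the degree-$6$ product is not the zero polynomial). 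With that correction your argument is complete and matches the paper's.
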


\begin{proof} We shall compute explicit highest weight vectors in the symmetric tensor square 
${\mathrm{S}}^2({\mathcal{I}}({\mathcal{F}}_{{\mathbb{C}},J})_3)$, and select those that are mapped to zero under 
$L_{{\mathbb{C}},J}:{\mathrm{S}}^2({\mathcal{I}}({\mathcal{F}}_{{\mathbb{C}},J})_3)\to {\mathbb{C}}[{\mathcal{N}}_{{\mathbb{C}},J}]_6$, 
the  complexified version of $L$. 

Fix a highest weight vector $y_3$ in the irreducible $SO_3({\mathbb{C}},J)$-module $W_{(3)}$. 
So $E(y_3)=0$, and $y_3$ is uniquely determined  up to a non-zero scalar multiple. 
Then there is a unique basis $\{y_k\mid k=3,2,1,0,-1,-2,-3\}$ in $W_{(3)}$ such that 
$E(y_k)=y_{k+1}$ for $k=-3,-2,\ldots,2$. 
Set $F:=E_{31}-E_{23}$ and $H:=E_{11}-E_{22}$. 
Then $F$ spans the unipotent radical of the negative Borel subalgebra of the Lie algebra 
${\mathrm{so}}_3({\mathbb{C}},J)$, and $H$ spans the Cartan subalgebra of ${\mathrm{so}}_3({\mathbb{C}},J)$. 
Moreover, $H(y_k)=ky_k$, i.e. $y_k$ is a weight vector with weight $k$ for $k=-3,\ldots,3$. 
The relation $H=[E,F]=EF-FE$ shows that 
$y_2=\frac 13 F(y_3)$, $y_1=\frac 15 F(y_2)$, $y_0=\frac 16 F(y_1)$, $y_{-1}=\frac 16 F(y_0)$, 
$y_{-2}=\frac 15 F(y_{-1})$, $y_{-3}=\frac 13 F(y_{-2})$. Furthermore, $F(y_{-3})=0$. 
An easy character calculation yields 
\[{\mathrm{S}}^2(W_{(3)})\cong W_{(6)}+W_{(4)}+W_{(2)}+W_{(0)}.\] 
The highest weight vectors of the first three summands are 
\[w_{(6)}:=y_3^2, \quad w_{(4)}:=2y_3y_1-y_2^2, \quad w_{(2)}:=2y_3y_{-1}-2y_2y_0+y_1^2.\] 
Denote by $\iota$ the unique $SO_3({\mathbb{C}},J)$-module isomorphism 
$W_{(3)}\to{\mathcal{I}}({\mathcal{F}}_{{\mathbb{C}},J})_3$ mapping $y_3$ to the highest weight vector 
$x_{31}^3+3x_{21}x_{31}x_{11}-x_{21}^2x_{32}$ of ${\mathcal{I}}({\mathcal{F}}_{{\mathbb{C}},J})_3$ 
computed in the proof of Proposition~\ref{prop:idealcubic}. 
Keep the notation $\iota$ also for the induced isomorphism  
${\mathrm{S}}^2(W_{(3)})\to {\mathrm{S}}^2({\mathcal{I}}({\mathcal{F}}_{{\mathbb{C}},J})_3)$. The effect of $F$ on the variables $x_{ij}$ can be computed similarly to the first table in 
the proof of Proposition~\ref{prop:idealcubic}: 

\begin{center} 
\begin{tabular}{c||c|c|c|c|c}
$x\in{\mathcal{N}}_{{\mathbb{C}},J}^\star$ & $x_{21}$ & $x_{31}$ & $x_{11}$ & $x_{32}$ & $x_{12}$ 
\\ \hline 
$F(x)$ & $2x_{31}$ & $-3x_{11}$ & $x_{32}$ & $-x_{12}$ & $0$
\end{tabular} 
\end{center}  

Consequently, 
\[\iota(y_2)=\frac 13F\iota(y_3)=
\frac 13(-3x_{31}^2x_{11}-9x_{21}x_{11}^2-x_{31}x_{21}x_{32}+x_{21}^2x_{12})\]  
\[\iota(y_1)=\frac 15F(\iota(y_2))=\frac 13(-x_{31}^2x_{32}-3x_{21}x_{11}x_{32}+
x_{31}x_{21}x_{12})\]
\[\iota(y_0)=\frac 16 F\iota(y_1)=
\frac 16(x_{31}^2x_{12}-x_{21}x_{32}^2)\]
\[\iota(y_{-1})=\frac 16 F\iota(y_0)=
\frac 1{18}(-3x_{31}x_{11}x_{12}-x_{31}x_{32}^2+x_{21}x_{32}x_{12})\]

Now one gets by direct computation that 
$L_{{\mathbb{C}},J}\circ\iota (w_{(2)})=2\iota(y_3)\iota(y_{-1})-2\iota(y_2)\iota(y_0)+\iota(y_1)^2=0$, whereas 
$w_{(6)}$, $w_{(4)}$ do not belong to the kernel of $L_{{\mathbb{C}},J}\circ\iota$. 
Obviously $L_{{\mathbb{C}},J}\circ\iota (w_{(0)})$ is a non-zero scalar multiple of the discriminant 
(by Lemma~\ref{lemma:irred}), hence is non-zero. 
\end{proof} 

The linear and constant coefficients of the characteristic polynomial of 
the trace zero symmetric $3\times 3$ matrix  
\[\left(\begin{array}{ccc}a & d & e \\d & b & f \\e & f & c\end{array}\right)\quad\mbox{ where }
\quad c=-a-b\]
are 
\begin{eqnarray*}p&=&ab+ac+bc-d^2-e^2-f^2\\
&=& -a^2 - ab - b^2 - d^2 - e^2 - f^2
\end{eqnarray*}
and 
\begin{eqnarray*}q&=&-abc + af^2 + be^2 + cd^2 - 2def \\
&=& a^2b + ab^2 - ad^2 + af^2 - bd^2 + be^2 - 2def. 
\end{eqnarray*}

\begin{theorem}\label{thm:fivesquares} 
The discriminant $\delta=-4p^3-27q^2$ (where $p,q$ are given above) of $3\times 3$ trace zero symmetric matrices can be written as 
\begin{eqnarray*} 
\delta =&{}& 
27( aef - bef -de^2+ df^2)^2 \\ 
&+& 
(2a^3 + 3a^2b - 3ab^2 - ad^2 + 2ae^2- af^2 - 2b^3 + bd^2  + be^2 - 2bf^2)^2 \\ 
&+& 
(4a^2d + 10abd + 3aef + 4b^2d + 3bef - 2d^3 + de^2 + df^2)^2\\ 
&+& 
4(a^2e + abe+ 3adf  - 2b^2e+ 3bdf  - 2d^2e + e^3 + ef^2)^2 \\ 
&+& 
4(2a^2f  - abf -3ade- b^2f - 3bde + 2d^2f - e^2f - f^3)^2. 
\end{eqnarray*}
Moreover, $\delta=-4p^3-27q^2$ can not be written as the sum of four (or less) squares in 
${\mathbb{R}}[{\mathcal{N}}]={\mathbb{R}}[a,b,d,e,f]$,  
so $\mu(3)=5$. 
\end{theorem}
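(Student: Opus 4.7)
The proof divides into two parts: the upper bound $\mu(3) \leq 5$ by verifying the displayed identity, and the lower bound $\mu(3) \geq 5$.

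For the upper bound, both sides of the claimed identity are homogeneous of degree six in $a, b, d, e, f$. The check amounts to expanding $-4p^3 - 27q^2$ and the weighted sum $27 f_1^2 + f_2^2 + f_3^2 + 4 f_4^2 + 4 f_5^2$ and comparing coefficients monomial by monomial. The cyclic $\mz_3$-symmetry permuting the eigenvalues $(a, b, -a-b)$ reduces the workload considerably; in any case the verification is routine by computer algebra such as \cocoa.

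For the lower bound, suppose $\delta = \sum_{i=1}^{k} g_i^2$ with $g_i$ homogeneous of degree 3, and set $W := \ideal(\tzdsm)_3$. Positivity of the $g_i(A)^2$ for $A \in \tzdsm$ (where $\delta = 0$) forces each $g_i$ to vanish on $\tzdsm$, so $g_i \in W$. By Proposition~\ref{prop:idealcubic}, $W \cong W_{(3)} \cong \harmonics^3(\mr^3)$ is the seven-dimensional irreducible $SO_3$-module. The Gram element $G := \sum_i g_i \otimes g_i \in S^2(W)$ is positive semidefinite of rank $\dim \mathrm{span}(g_1, \ldots, g_k) \leq k$, and $L(G) = \delta$, where $L : S^2(W) \to \mr[\tzrsm]_6$ is the multiplication map of Proposition~\ref{prop:multiplication}; since $\ker L \cong W_{(2)}$ has dimension 5, the fiber $L^{-1}(\delta)$ is a 5-dimensional affine subspace of $S^2(W)$. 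Fix a basis of $W$ orthonormal for the $SO_3$-invariant positive definite inner product (unique up to scalar), and identify $S^2(W) \cong \mathrm{Sym}^2(\mr^7)$. The one-dimensional subspace $S^2(W)^{SO_3}$ is spanned by the identity matrix $I$, and by Lemma~\ref{lemma:lax}, $L(I) = c\delta$ for some $c > 0$, so $\alpha I \in L^{-1}(\delta)$ with $\alpha := 1/c > 0$. Every element of $L^{-1}(\delta)$ thus takes the form $\alpha I + s$ with $s \in W_{(2)} \subset \mathrm{Sym}^2(\mr^7)$, and is positive semidefinite of rank $\leq 4$ if and only if $s$ has $-\alpha$ as minimum eigenvalue with multiplicity at least three.

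The lower bound therefore reduces to showing: no element $s$ of the 5-dimensional subspace $W_{(2)} \subset \mathrm{Sym}^2(\mr^7)$ has $-\alpha$ as its minimum eigenvalue of multiplicity $\geq 3$. This last step is the main obstacle. My plan is to make the embedding $W_{(2)} \hookrightarrow S^2(W)$ explicit by applying the lowering operator $F \in {\mathrm{so}}_3(\mc)$ to the highest weight vector of the $W_{(2)}$-summand (following the template used in the proof of Proposition~\ref{prop:multiplication}), obtaining five concrete $7 \times 7$ symmetric matrices $v_{-2}, v_{-1}, v_0, v_1, v_2$ spanning $W_{(2)}$. Writing $s = \sum_k \lambda_k v_k$, the characteristic polynomial $\chi_s$ has coefficients polynomial in $\lambda_{-2}, \ldots, \lambda_2$. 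Since $W_{(2)} \cong \tzrsm$ as $SO_3$-module (the adjoint action on $3 \times 3$ trace-zero symmetric matrices), the polynomial $SO_3$-invariants on $W_{(2)}$ are generated in degrees 2 and 3; expressing the three conditions $\chi_s(-\alpha) = \chi_s'(-\alpha) = \chi_s''(-\alpha) = 0$ together with the inequality $s + \alpha I \succeq 0$ in terms of these two basic invariants should reduce the desired contradiction to a concrete semialgebraic problem in two variables.
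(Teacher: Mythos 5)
Your upper-bound argument is fine: the identity can indeed be checked by expansion (the paper itself says so, and also derives the formula representation-theoretically). Your lower-bound \emph{reduction} is correct and conceptually close to the paper's: you identify $\ideal(\tzdsm)_3$ with the irreducible $W_{(3)}$, observe that the would-be Gram element lies in the affine fiber $L^{-1}(\delta) = \alpha I + \ker L$ with $\ker L \cong W_{(2)}$ five-dimensional, and that a four-square representation would give a PSD element of this fiber of rank at most four, equivalently an $s \in W_{(2)}$ with $-\alpha$ as its smallest eigenvalue of multiplicity at least three. All of this is sound.

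The gap is that you then stop: the paragraph beginning ``My plan is to\dots'' is a sketch of a computation you have not carried out, and the hard part of the theorem is exactly this step. The paper finishes differently and more economically: it complexifies, uses the explicit weight basis $y_3,\dots,y_{-3}$ of $W_{(3)}$ and the explicit elements $k_2,\dots,k_{-2}$ spanning the $W_{(2)}$-summand of $S^2(W_{(3)})$, writes the Gram element as $w_{(0)} + 2\sum_{j=-2}^{2} a_j k_j$, displays the resulting $7\times 7$ complex symmetric matrix with entries linear in the $a_j$, and shows by inspecting a few $5\times 5$ minors that rank $\leq 4$ forces $a_2 = a_1 = a_{-1} = a_{-2} = 0$, after which the matrix visibly has rank at least five for every $a_0$. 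Because this is a pure rank argument over $\mc$, no positivity condition, no computation of the constant $\alpha$, no orthonormal real basis, and no characteristic-polynomial or invariant-theoretic analysis is needed. Your real/PSD route is in principle viable and would need the PSD constraint precisely because, unlike over $\mc$, rank alone is not enough to produce a sum of that many real squares; but as written it is a program, not a proof, and it would require substantially more explicit computation (the normalization of the invariant inner product, the scalar $\alpha$, the $7\times7$ matrices $v_k$, and the resulting semialgebraic system) than the minor computation the paper uses.
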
 

\begin{proof} The formula can be checked by direct computation. 
We provide a derivation of it guided by representation theory;  
this approach leads also to a proof of the inequality $\mu(3)\geq 5$. 
Keep the notation introduced in the proof of Proposition~\ref{prop:multiplication}. 
Applying $F$ successively to $w_{(2)}$ we obtain the following basis in the summand $W_{(2)}$ of 
${\mathrm{S}}^2(W_{(3)})$: 
\begin{eqnarray*} & k_2:=2y_3y_{-1}-2y_2y_0+y_1^2,\quad 
k_1:=5y_3y_{-2}-3y_2y_{-1}+y_1y_0, \quad 
\\ & k_0:=5y_3y_{-3}-3y_1y_{-1}+2y_0^2, \quad 
\\ & k_{-1}:=5y_2y_{-3}-5y_1y_{-2}+2y_0y_{-1} \quad 
k_{-2}:=5y_1y_{-3}-10y_0y_{-2}+6y_{-1}^2.\end{eqnarray*} 
It is easy to see that the highest weight vector in the trivial summand $W_{(0)}$ of 
${\mathrm{S}}^2(W_{(2)})$ is 
\[w_{(0)}=2y_3y_{-3}-2y_2y_{-2}+2y_1y_{-1}-y_0^2.\]
Identify ${\mathrm{S}}^2(W_{(3)})\cong {\mathrm{S}}^2({\mathcal{I}}({\mathcal{F}}_{{\mathbb{C}},J})_3)$ via the isomorphism  
$\iota$ (see the proof of Proposition~\ref{prop:multiplication}). 
Now $L_{{\mathbb{C}},J}\circ\iota(w_{(0)})$ is a  non-zero scalar multiple of the discriminant  
$\delta\in{\mathbb{C}}[{\mathcal{N}}_{{\mathbb{C}},J}]$, since $\delta$  is the only degree six $SO_3({\mathbb{C}},J)$-invariant in ${\mathcal{I}}({\mathcal{F}}_{{\mathbb{C}},J})_6$. 
Moreover, $L_{{\mathbb{C}},J}\circ\iota(k_i)=0$ for $i=-2,-1,0,1,2$ by the proof of Proposition~\ref{prop:multiplication}. Consequently, there is a non-zero $c\in{\mathbb{C}}$ with 
\begin{equation}\label{eq:cdelta}
c\delta = L_{{\mathbb{C}},J}\circ\iota(5w_{(0)}-2k_0)
=L_{{\mathbb{C}},J}\circ\iota(-10y_2y_{-2}+16y_1y_{-1}-9y_0^2). 
\end{equation}
Complete the computation of the effect of $\iota$ on the basis of weight vectors of $W_{(3)}$ 
started in the proof of Proposition~\ref{prop:multiplication}: 
\[\iota(y_{-2})=\frac 15 F(\iota(y_{-1}))=
 \frac 1{90}(9x_{11}^2x_{12}+x_{31}x_{32}x_{12}+3x_{11}x_{32}^2-x_{21}x_{12}^2)\]
 \[\iota(y_{-3})=\frac 13 F(\iota(y_{-2}))=
 \frac 1{90}(3x_{11}x_{12}x_{32}-x_{31}x_{12}^2+x_{32}^3)\] 
 Denote by $\sigma:{\mathbb{C}}[{\mathcal{N}}_{{\mathbb{C}},J}]\to {\mathbb{C}}[{\mathcal{N}}_{{\mathbb{C}}}]={\mathbb{C}}[a,b,d,e,f]$ the 
 ${\mathbb{C}}$-algebra isomorphism given by 
 \begin{equation}\label{eq:sigma}
 \left(\begin{array}{ccc}\sigma(x_{11}) & \sigma(x_{12}) & \sigma(x_{13}) \\\sigma(x_{21}) & \sigma(x_{22}) & \sigma(x_{23}) \\\sigma(x_{31}) & \sigma(x_{32}) & \sigma(x_{33})\end{array}\right)
 =K^{-1}\left(\begin{array}{ccc}a & d & e \\d & b & f \\ e & f & -a-b\end{array}\right)K
\end{equation}
 where $K$ is the base change matrix given at the beginning of the proof of Proposition~\ref{prop:idealcubic}, so 
\begin{eqnarray*} 
\sigma(x_{11})&=&\frac 12(a+b), \\ 
\sigma(x_{21})=\frac 12(a-b)+{\mathrm{i}} \cdot d, &\quad &
\sigma(x_{12})=\frac 12(a-b)-{\mathrm{i}} \cdot d,\\  
\sigma(x_{31})=\frac 1{\sqrt 2}(e+{\mathrm{i}} \cdot f), &\quad &  
\sigma(x_{32})=\frac 1{\sqrt 2}(e-{\mathrm{i}} \cdot f). 
\end{eqnarray*} 
Note that since the characteristic polynomial of a matrix is conjugation invariant, it follows from 
(\ref{eq:sigma}) that the coefficients of the characteristic polynomial of $(x_{ij})_{3\times3}$ are mapped by $\sigma$ to the corresponding coefficients of the characteristic polynomial of 
the general symmetric trace zero matrix displayed before the statement of 
Theorem~\ref{thm:fivesquares}. Consequently, $\sigma$ maps $\delta\in{\mathbb{C}}[{\mathcal{N}}_{{\mathbb{C}},J}]$ to 
$\delta\in{\mathbb{C}}[{\mathcal{N}}_{{\mathbb{C}}}]$. 

Introduce the following operation on ${\mathbb{C}}[{\mathcal{N}}_{{\mathbb{C}}}]={\mathbb{C}}[a,b,d,e,f]$: a polynomial $h\in{\mathbb{C}}[{\mathcal{N}}_{{\mathbb{C}}}]$ 
can be uniquely written as ${\mathrm{Re}}(h)+{\mathrm{i}} \cdot{\mathrm{Im}}(h)$, where ${\mathrm{Re}}(h)$, ${\mathrm{Im}}(h)$ belong to the 
${\mathbb{R}}$-subalgebra ${\mathbb{R}}[{\mathcal{N}}]={\mathbb{R}}[a,b,d,e,f]$ of 
${\mathbb{C}}[{\mathcal{N}}_{{\mathbb{C}}}]$. Then set 
$\overline{h}:={\mathrm{Re}}(h)-{\mathrm{i}} \cdot{\mathrm{Im}}(h)$. 
Note that $h\cdot \overline{h}={\mathrm{Re}}(h)^2+{\mathrm{Im}}(h)^2$, and 
$h-\overline{h}=2{\mathrm{i}} \cdot{\mathrm{Im}}(h)$. 
Now observe that 
\[\sigma(x_{12})=\overline{\sigma(x_{21})}, \quad 
\sigma(x_{32})=\overline{\sigma(x_{31})}, \quad \mbox{and} \quad 
\overline{\sigma(x_{11})}=\sigma(x_{11}).\] 
Setting $z_j:=\sigma(\iota(y_j))$ for $j=\pm 1,\pm 2$, 
and $ z_0:=\frac 16\sigma(x_{31}^2x_{12})$, 
we have 
\[z_{-2}=-\frac 1{30}\overline{z_{2}},\quad 
z_{-1}=\frac 1{6}\overline{z_{1}},\quad  
\sigma(\iota(y_{0}))=z_0-\overline{z_0}. \]
Therefore by (\ref{eq:cdelta}) we get the following equality in ${\mathbb{C}}[{\mathcal{N}}_{{\mathbb{C}}}]$: 
\[c\delta=\frac 13({\mathrm{Re}}(z_2)^2+{\mathrm{Im}}(z_2)^2)
+\frac 83({\mathrm{Re}}(z_1)^2+{\mathrm{Im}}(z_1)^2)+36\cdot {\mathrm{Im}}(z_0)^2.\]
The right hand side is the sum of squares of five real polynomials; the constant turns out to be 
$\frac 1{4\cdot 27}$. Multiplying by $4\cdot 27$ the above equality one gets the formula in the  theorem. 

Next we show that the discriminant can not be written as the sum of four (or less) squares. 
Suppose to the contrary that $\delta=f_1^2+f_2^2+f_3^2+f_4^2$ for some $f_i\in{\mathbb{R}}[{\mathcal{N}}]$. 
Then all the $f_i$ are homogeneous of degree $3$, and all vanish on ${\mathcal{F}}$, hence 
$f_i\in{\mathcal{I}}({\mathcal{F}})_3$. Denote by $h_i$ the elements in the $SO_n({\mathbb{C}},J)$-module $W_{(3)}$ with 
$\sigma\circ\iota(h_i)=f_i$. 
Since $\sigma$ maps $\delta\in{\mathbb{C}}[{\mathcal{N}}_{{\mathbb{C}},J}]$ to $\delta\in{\mathbb{C}}[{\mathcal{N}}_{{\mathbb{C}}}]$, 
we have the equality $\sum_{i=1}^4\iota(h_i)^2=\delta\in{\mathbb{C}}[{\mathcal{N}}_{{\mathbb{C}},J}]$, i.e. 
$L_{{\mathbb{C}},J}\circ \iota (\sum_{i=1}^4h_i^2)=\delta\in {\mathbb{C}}[{\mathcal{N}}_{{\mathbb{C}},J}]$. 
Consequently, $c\sum_{i=1}^4h_i^2-w_{(0)}$ belongs to the kernel of $L_{{\mathbb{C}},J}\circ\iota$ for some non-zero $c\in{\mathbb{C}}$. 
By Proposition~\ref{prop:multiplication} 
there exist scalars $a_2,a_1,a_0,a_{-1},a_{-2}\in{\mathbb{C}}$ 
such that we have the following equality in ${\mathrm{S}}^2(W_{(3)})$: 
\begin{equation}\label{eq:foursquares} 
c(h_1^2+h_2^2+h_3^2+h_4^2)=w_{(0)}+2\sum_{j=-2}^2a_jk_j.
\end{equation}  
The choice of the basis $y_3,y_2,y_1,y_0,y_{-1},y_{-2},y_{-3}$ induces an identification between 
${\mathrm{S}}^2(W_{(3)})$ and the space of $7\times 7$ symmetric complex matrices. 
The element on the right hand side of (\ref{eq:foursquares}) 
corresponds to 
\[\left(\begin{array}{ccccccc}0 & 0 & 0 & 0 & 2a_2 & 5a_1 & 1+5a_0 \\0 & 0 & 0 & -2a_2 & -3a_1 & -1 & 5a_{-1} \\0 & 0 & 2a_2 & a_1 & 1-3a_0 & -5a_{-1} & 5a_{-2} \\0 & -2a_2 & a_1 & -1+4a_0 & 2a_{-1} & -10a_{-2} & 0 \\2a_2 & -3a_1 & 1-3a_0 & 2a_{-1} & 12a_{-2} & 0 & 0 \\5a_1 & -1 & -5a_{-1} & -10a_{-2} & 0 & 0 & 0 \\1+5a_0 & 5a_{-1} & 5a_{-2} & 0 & 0 & 0 & 0\end{array}\right).\]
 The left hand side of (\ref{eq:foursquares}) implies that the rank of the above matrix is at most four. 
 Therefore the determinant of the left upper $5\times 5$ minor is zero, hence $a_2=0$. 
 Similarly, the vanishing of the determinants of appropriate $5\times 5$ minors shows succesively 
 that $0=a_1=a_{-2}=a_{-1}$. Then the rank of our matrix is five if $1+5a_0$ or 
 $1-3a_0$ equals zero, the rank is six  if  $-1+4a_0=0$, and the rank is seven otherwise. This is a contradiction, hence 
 $\delta$ can not be written as the sum of four (or less) squares. 
\end{proof} 

\begin{remark}\label{remark:kummer} {\rm 
For comparison we give the expression for $\delta=-4p^3-27q^2$ as a sum of seven squares that is obtained  from the formula of  Kummer \cite{kummer} after restriction to ${\mathcal{N}}$: 
\begin{eqnarray*}\delta &=& 
15(ade + 2bde - d^2f + e^2f)^2 \\ 
&+& 
15( - 2adf - bdf +d^2e- ef^2)^2\\ 
&+&
15( aef - bef-de^2 + df^2)^2\\ 
&+& 
(- 4a^2f + 2abf +3ade + 2b^2f - d^2f - e^2f + 2f^3)^2 \\ 
&+& 
(2a^2e + 2abe - 4b^2e + 3bdf - d^2e + 2e^3 - ef^2)^2 \\ 
&+& 
(-4a^2d - 10abd - 3aef - 4b^2d  - 3bef + 2d^3 - de^2- df^2)^2\\ 
&+& 
(-2a^3 - 3a^2b + 3ab^2 + ad^2 - 2ae^2  + af^2+ 2b^3 - bd^2 - be^2 + 2bf^2)^2.
\end{eqnarray*} 
We mention that an alternative way to arrive at Kummer's  formula was given by 
Jacobi \cite{jacobi}. Computational aspects of the problem of writing a form as a sum of squares are discussed by Parrilo in \cite{parrilo}; in particular, using a method based on semidefinite programming Parrilo finds the same presentation for the $3\times 3$ discriminant as Kummer! 
Specializing $a\mapsto 0$, $b\mapsto 0$ in the above equality one recovers the expression for the discriminant $4(d^2+e^2+f^2)^3-108d^2e^2f^2$ of 
a symmetric $3\times 3$ matrix with zero diagonal entries 
as the sum of six squares found in \cite{lax:1998}. 
The specialization $a\mapsto 0$, $b\mapsto 0$ of the formula in Theorem~\ref{thm:fivesquares} 
yields 
\begin{eqnarray*}
4(d^2+e^2+f^2)^3-108d^2e^2f^2&=&
27(-de^2 + df^2)^2
+
(-2d^3 + de^2 + df^2)^2 \\
&+&
4(-2d^2e + e^3 + ef^2)^2
+
4(2d^2f - e^2f - f^3)^2
\end{eqnarray*}
(a sum of four squares on the right hand side). 
}\end{remark} 

\begin{remark}\label{remark:canbesmaller} {\rm
Comparing Proposition~\ref{prop:idealcubic} and Theorem~\ref{thm:fivesquares} we see 
that $\mu(n)$ can be strictly smaller than the minimal dimension of an irreducible 
$SO_n$-submodule in the degree $n(n-1)/2$ homogeneous component of the vanishing ideal of ${\mathcal{F}}$.}
\end{remark} 


\section{The case $n=4$}\label{sec:n=4} 

\begin{theorem}\label{thm:n=4} 
When $n=4$, the image of $(\bigwedge^3{\mathcal{N}})^\star$ under ${\mathcal{T}}^\star$ 
 in the degree $6$ homogeneous component of ${\mathcal{I}}({\mathcal{F}})$  is isomorphic as an $SO_4$-module to 
\[ W_{(3,3)}+W_{(3,-3)}+ W_{(4)}\] 
(the dimensions of the summands are $7,7,25$). 
Consequently, the discriminant of $4\times 4$ symmetric matrices can be written as the sum of seven squares. 
\end{theorem}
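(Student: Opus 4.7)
The plan is to pin down the $SO_4$-module structure of the image of $\cov^\star$ inside $\ideal(\tzdsm)_{6}$ by combining a character calculation with the exhibition of explicit highest weight vectors, then using Section~\ref{sec:kercovstar} to account for everything that sits in the kernel. Once the image is identified, the seven-squares bound drops out of Lemma~\ref{lemma:irred}.

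First I would compute characters. With the maximal torus $\mt$ of $SO_4(\mc,J)$ acting on $\tzrsm_{\mc,J}\cong W_{(2)}$, the nine coordinate functions $x_{ij}$ carry the weights $(0,0)$, $(\pm1,\pm1)$, $(\pm2,0)$, $(0,\pm2)$. Expanding the third elementary symmetric function in these weights gives the character of $\bigwedge^3\tzrsm_{\mc,J}^\star$, and matching against $SO_4(\mc,J)$-irreducibles (using $\dim W_{(\lambda_1,\lambda_2)}=(\lambda_1+\lambda_2+1)(\lambda_1-\lambda_2+1)$) yields the full decomposition. In particular $W_{(4)}$, $W_{(3,3)}$, and $W_{(3,-3)}$ each appear with multiplicity one.

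Next I would produce explicit highest weight vectors of the three required weights. The weight $(4,0)$ vector $x_{21}\wedge x_{31}\wedge x_{41}$ is already provided, with nonzero image, by Proposition~\ref{prop:highestweight}. The weight $(3,3)$ subspace of $\bigwedge^3\tzrsm_{\mc,J}^\star$ turns out to be one-dimensional, spanned by $x_{31}\wedge x_{41}\wedge x_{42}$, which is therefore automatically annihilated by the unipotent radical $u^+$; analogously $x_{31}\wedge x_{21}\wedge x_{24}$ spans the weight $(3,-3)$ weight space. To verify nonvanishing of $\cov_{\mc,J}^\star$ on these, I would exploit $O_4(\mc,J)$-equivariance: since $V_{(3,3)}|_{SO_4}=W_{(3,3)}\oplus W_{(3,-3)}$ it is enough to check one of the two seven-dimensional summands. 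Plugging in the cyclic permutation matrix $A\in\tzrsm_{\mc,J}$ from the proof of Proposition~\ref{prop:highestweight} (for which $\mathrm{Tr}(A^2)=\mathrm{Tr}(A^3)=0$, so $H_i(A)=A^i$) and evaluating $\cov_{\mc,J}^\star(x_{31}\wedge x_{41}\wedge x_{42})(A)$ as a $3\times 3$ determinant of entries of $A,A^2,A^3$ gives a nonzero value. Hence $W_{(4)}\oplus W_{(3,3)}\oplus W_{(3,-3)}$ is contained in the image.

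For the reverse inclusion I would invoke Section~\ref{sec:kercovstar}, which locates explicit irreducible $SO_n$-summands in $\ker\cov^\star$; for $n=4$ these are tailored to cover precisely the complement in Step 1 of the three irreducibles found above, pinning the image down exactly. Finally, the seven-squares consequence: since $n=4m$ with $m=1$ and $h_1((3,3))=2=n/2$, the branching recalled in Section~\ref{sec:repsofonr} shows that $W_{(3,3)}$ is already a $7$-dimensional real irreducible $SO_4$-submodule of $\ideal(\tzdsm)_{6}$, and Lemma~\ref{lemma:irred} produces $\delta$ as a sum of seven real squares. The main obstacle I anticipate is bookkeeping in Step 1 and matching the resulting list of irreducibles cleanly against the kernel summands supplied by Section~\ref{sec:kercovstar}; the highest weight step itself is easy thanks to one-dimensionality of the two relevant weight spaces, and the test-matrix evaluation reduces to a single $3\times 3$ determinant.
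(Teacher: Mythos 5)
Your outline matches the paper's approach in almost every step: the character computation, the identification of highest weight vectors for $W_{(4)}$, $W_{(3,3)}$, $W_{(3,-3)}$, the evaluation at the cyclic permutation matrix $A$, and the appeal to $O_4(\mc,J)$-equivariance to get $W_{(3,-3)}$ for free. All of that is sound.

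The genuine gap is in your ``reverse inclusion'' step. You assert that Section~\ref{sec:kercovstar} supplies kernel summands that ``cover precisely the complement'' of $W_{(4)}\oplus W_{(3,3)}\oplus W_{(3,-3)}$, but that is not what that section does. The full decomposition is
$$\bigwedge^3\tzrsm_{\mc,J}^\star\cong W_{(3,3)}+W_{(3,-3)}+W_{(4)}+W_{(3,1)}+W_{(3,-1)}+W_{(2)}+W_{(1,1)}+W_{(1,-1)},$$
and Section~\ref{sec:kercovstar} only accounts for $W_{(1,1)}+W_{(1,-1)}$ (via the map $\kappa$, whose target is $\mathrm{so}_4$) and $W_{(2)}$ (via the pfaffian map $\gamma$). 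It says nothing about the two $15$-dimensional summands $W_{(3,1)}$ and $W_{(3,-1)}$. There is no structural reason offered anywhere in the paper that these lie in $\ker\cov^\star$; the paper settles the matter by computing the unique highest weight vector of weight $(3,1)$, namely $x_{21}\wedge x_{31}\wedge x_{42}+2x_{11}\wedge x_{31}\wedge x_{41}$ (a two-dimensional weight space, so finding the highest weight vector requires solving a small linear system against $E_1$), and then verifying directly, with computer assistance, that $\cov_{\mc,J}^\star$ annihilates it; $O_4$-equivariance then disposes of $W_{(3,-1)}$ as well. Without this explicit check, your argument would only establish $W_{(4)}\oplus W_{(3,3)}\oplus W_{(3,-3)}$ as a submodule of the image, which suffices for the seven-squares corollary via Lemma~\ref{lemma:irred} but not for the stated isomorphism.
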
 

\begin{proof} The second statement follows from the first by Lemma~\ref{lemma:irred}. 

To prove the first statement we may turn to the analogous statement for $SO_4({\mathbb{C}},J)$ and 
${\mathcal{N}}_{{\mathbb{C}},J}$  
(see the explanation in Sections~\ref{sec:repsofonr} and \ref{sec:spherical}).  
By a standard character calculation (the character of $W_{\lambda}$ is given for example in Section 24.2 of \cite{fulton-harris}) one obtains 
\[\bigwedge^3{\mathcal{N}}_{{\mathbb{C}},J}^\star\cong  W_{(3,3)}+W_{(3,-3)}+ W_{(4)}+W_{(3,1)}+W_{(3,-1)}+W_{(2)}+ W_{(1,1)} + W_{(1,-1)}\] 
as $ SO_4({\mathbb{C}},J)$-modules. Since $\dim(W_{\lambda})=(\lambda_1+1)^2-\lambda_2^2$ 
(see for example page 410 in \cite{fulton-harris}), the dimensions of the summands above are 
$7$, $7$, $25$, $15$, $15$, $9$, $3$, $3$. 
Since $W_{(1,1)}+ W_{(1,-1)}$ is isomorphic to the adjoint representation of $ SO_4({\mathbb{C}},J)$ on its Lie algebra ${\mathrm{so}}_4({\mathbb{C}},J)$, these two summands are annihilated by 
${\mathcal{T}}_{{\mathbb{C}},J}^\star$ by the considerations about the map $\kappa$ in Section~\ref{sec:kercovstar}.  
The summand $W_{(2)}\cong{\mathcal{N}}_{{\mathbb{C}},J}$ is also annihilated by ${\mathcal{T}}_{{\mathbb{C}},J}^\star$, see 
 Section~\ref{sec:kercovstar}. The result of Section~\ref{sec:spherical} in the special case 
 $n=4$ says that  $W_{(4)}$ occurs as a summand in the image of ${\mathcal{T}}_{{\mathbb{C}},J}^\star$. 

Now we shall find the highest weight vectors of weight $(3,3)$ respectively $(3,1)$ in 
$\bigwedge^3{\mathcal{N}}_{{\mathbb{C}},J}^\star$. 
The unipotent radical of the positive Borel subalgebra of ${\mathrm{so}}_4({\mathbb{C}},J)$ is spanned by 
$E_1:=E_{12}-E_{43}$ and $E_2:=E_{14}-E_{23}$, 
and the Cartan subalgebra of ${\mathrm{so}}_4({\mathbb{C}},J)$ is spanned by 
$H_1:=E_{11}-E_{33}$, $H_2:=E_{22}-E_{44}$ 
(see for example Section 10.4.1 in \cite{procesi}). 
Denote by $x_{ij}$ ($1\leq i,j\leq 4$) the usual coordinate functions on ${\mathcal{N}}_{{\mathbb{C}},J}$. 
Then $x_{11}$, $x_{12}$, $x_{13}$, $x_{14}$, $x_{24}$, $x_{21}$, $x_{31}$, $x_{41}$, 
$x_{42}$ is a basis in ${\mathcal{N}}_{{\mathbb{C}},J}^\star$, and we have the relations 
$-x_{22}=x_{33}=-x_{44}=x_{11}$, $x_{23}=x_{14}$, $x_{32}=x_{41}$, $x_{34}=x_{21}$, 
$x_{43}=x_{12}$. 
Recall that $ SO_4({\mathbb{C}},J)$ acts on ${\mathcal{N}}_{{\mathbb{C}},J}$ by conjugation. 
The tangent representation of the dual representation of $ SO_4({\mathbb{C}},J)$ 
on ${\mathcal{N}}_{{\mathbb{C}},J}^\star$ is the following representation of the Lie algebra ${\mathrm{so}}_4({\mathbb{C}},J)$: 
for a Lie algebra element $A\in{\mathrm{so}}_4({\mathbb{C}},J)$ and $x_{ij}\in{\mathcal{N}}_{{\mathbb{C}},J}^\star$ we have that  
$A(x_{ij})$ is the $(i,j)$ entry of the matrix commutator $XA-AX$, where $X=(x_{ij})_{i,j=1}^n$. 
Recall that given a representation of ${\mathrm{so}}_4({\mathbb{C}},J)$ on some vector space $V$, we say that $v\in V$ is a weight vector of weight $(\alpha_1,\alpha_2)\in {\mathbb{Z}}^2$ if 
$H_i(v)=\alpha_iv$ holds for $i=1,2$. In our case the $x_{ij}$ are all weight vectors. 
One gets the following table: 

\begin{center}
\begin{tabular}{c|c|c|c|c|c|c|c|c|c}
$x$ & $x_{11}$ & $x_{21}$ & $x_{31}$ & $x_{41}$ & $x_{42}$ &
$x_{12}$ & $x_{13}$ & $x_{14}$ & $x_{24}$ \\
\hline
weight & $0,0$ & $1,-1$& $2,0$ & $1,1$ & $0,2$ & $-1,1$ & $-2,0$ & $-1,-1$ & 
$0,-2$ \\ \hline 
$E_1(x)$ & $-x_{21}$ & $0$ & $0$ & $x_{31}$ & $2x_{41}$ & $2x_{11}$ & $-2x_{14}$ & 
$-x_{24}$ & $0$ \\ 
\hline 
$E_2(x)$ & $-x_{41}$ & $x_{31}$ & $0$ & $0$ & $0$ & $-x_{42}$ & $-2x_{12}$ & 
$2x_{11}$ & $2x_{21}$ 
\end{tabular}
\end{center} 

The $(3,3)$ weight space in $\bigwedge^3{\mathcal{N}}_{{\mathbb{C}},J}^\star$ is spanned by 
$x_{31}\wedge x_{41}\wedge x_{42}$, and this element is annihilated both by 
$E_1$ and $E_2$, as one can easily check using the table above. 
So this is a highest weight vector of weight $(3,3)$. 
Set 
\[A:=\left(\begin{array}{cccc}0 & 1 & 0 & 0 \\0 & 0 & 0 & 1 \\1 & 0 & 0 & 0 \\0 & 0 & 1 & 0\end{array}\right)\in{\mathcal{N}}_{{\mathbb{C}},J}, 
A^2=\left(\begin{array}{cccc}0 & 0 & 0 & 1 \\0 & 0 & 1 & 0 \\0 & 1 & 0 & 0 \\1 & 0 & 0 & 0\end{array}\right), 
A^3=\left(\begin{array}{cccc}0 & 0 & 1 & 0 \\1 & 0 & 0 & 0 \\0 & 0 & 0 & 1 \\0 & 1 & 0 & 0\end{array}\right).\] 
We have 
\[{\mathcal{T}}_{{\mathbb{C}},J}^\star(x_{31}\wedge x_{41}\wedge x_{42})(A)
=\det\left(\begin{array}{ccc}x_{31}(A) & x_{31}(A^2) & x_{31}(A^3) \\x_{41}(A) & x_{41}(A^2) & x_{41}(A^3) \\x_{42}(A) & x_{42}(A^2) &  x_{42}(A^3)\end{array}\right)=1\] 
so $x_{31}\wedge x_{41}\wedge x_{42}$ is not annihilated by 
${\mathcal{T}}_{{\mathbb{C}},J}^\star$, hence $W_{(3,3)}$ occurs as a summand in the image of ${\mathcal{T}}^\star$. 
Since this image is an $ O_4({\mathbb{C}},J)$-module, $W_{(3,-3)}$ is  also a summand in the image. 

The $(3,1)$ weight space is spanned by 
$x_{21}\wedge x_{31}\wedge x_{42}$ and $x_{11}\wedge x_{31}\wedge x_{41}$. 
Both are annihilated by $E_2$, whereas 
\[E_1(x_{21}\wedge x_{31}\wedge x_{42})=2x_{21}\wedge x_{31}\wedge x_{41},\quad 
E_1(x_{11}\wedge x_{31}\wedge x_{41})=-x_{21}\wedge x_{31}\wedge x_{41}.\]
Therefore there is one (up to non-zero scalar multiples) highest weight vector of weight $(3,1)$, namely 
$x_{21}\wedge x_{31}\wedge x_{42}+2 x_{11}\wedge x_{31}\wedge x_{41}$.  
We checked using {\hbox{\rm C\kern-.13em o\kern-.07em C\kern-.13em o\kern-.15em A}} \cite{cocoa} that ${\mathcal{T}}_{{\mathbb{C}},J}^\star$ maps this highest weight vector to zero. Consequently, $W_{(3,1)}$ is not a summand in the image of ${\mathcal{T}}_{{\mathbb{C}},J}^\star$. 
Since the image is an $ O_4({\mathbb{C}},J)$-submodule, $W_{(3,-1)}$ is not a summand in the image either. 
\end{proof} 

\begin{remark}\label{remark:borchardt} {\rm 
Borchardt \cite{borchardt} notes at the end of his paper that his general process for writing the discriminant as a sum of squares yields in the case $n=4$ an expression with $135$ summands, and states that the number of summands can be taken down to $84$. Moreover, he adds that the number $84$ may be further decreased, but does not specify the numbers.  }
\end{remark}

\begin{remark}\label{remark:mu(4)} {\rm 
Combining Theorems~\ref{thm:fivesquares} and ~\ref{thm:n=4} one gets 
$5\leq \mu(4)\leq 7$, since by Theorem 3 in \cite{lax:1998}, $\mu(n)$ is an increasing function of $n$. }
\end{remark}


\section{About the kernel of ${\mathcal{T}}^\star$}\label{sec:kercovstar}

We do not have a general formula for the multiplicities of the irreducible $SO_n$-summands in 
$ (\bigwedge^{n-1}{\mathcal{N}})^*\cong\bigwedge^{n-1}{\mathcal{N}}$. In Section~\ref{sec:spherical} we found an irreducible $SO_n$-submodule in  $(\bigwedge^{n-1}{\mathcal{N}})^*$ that is not mapped to zero under ${\mathcal{T}}^\star$. Here we present two constructions of some irreducible $O_n$-submodules 
(resp. $SO_n$-submodules) in the kernel of ${\mathcal{T}}^\star$.  

Denote by ${\mathrm{so}_n}$ the Lie algebra of $ SO_n$; it can be identified with the space of $n\times n$ skew-symmetric matrices, 
and the adjoint representation of $O_n$ is identified with the conjugation action. 
Moreover, the conjugation representation of $ O_n$ on the space ${\mathbb{R}}^{n\times n}$ of $n\times n$ matrices decomposes as 
${\mathbb{R}}^{n\times n}={\mathbb{R}} I\oplus {\mathcal{N}}\oplus {\mathrm{so}_n}$. 
We have an alternating multilinear $ O_n$-equivariant map 
\[{\mathcal{N}}\oplus \cdots\oplus{\mathcal{N}}\to{\mathbb{R}}^{n\times n},\quad (A_1,\ldots,A_{n-1})\mapsto 
\sum_{\pi\in {\mathrm{Sym}}(n-1)}{\mathrm{sign}}(\pi)A_{\pi(1)}\cdots A_{\pi(n-1)}\] 
where the summation above is over the full symmetric group ${\mathrm{Sym}}(n-1)$ of degree $n-1$. 
It induces an $ O_n$-module morphism 
\[\kappa:\bigwedge^{n-1}{\mathcal{N}} \to {\mathbb{R}}^{n\times n}.\] 
Denote by $\rho\in {\mathrm{Sym}}(n-1)$ the permutation $\rho(i)=n-i$ ($i=1,\ldots,n-1$). 
Then 
\[{\mathrm{sign}}(\rho)=
\begin{cases}-1, \quad &\mbox{  if  }n \mbox{ or }n-3 \mbox{ is divisible by } 4\\ 
 1, &\mbox{ if }n-1 \mbox{ or }n-2 \mbox{ is divisible by }4\end{cases}\]
The transpose of $\kappa(A_1\wedge\cdots\wedge A_{n-1})$ (where the $A_i$ are symmetric) is 
\[\sum_{\pi\in {\mathrm{Sym}}(n-1)}{\mathrm{sign}}(\pi)A_{\pi(n-1)}\cdots A_{\pi(1)}
={\mathrm{sign}}(\rho)\kappa(A_1\wedge\cdots\wedge A_{n-1})
\]
so ${\mathrm{im}}(\kappa)\subseteq {\mathcal{M}}$ when $n$ is congruent to $1$ or $2$ modulo $4$, whereas ${\mathrm{im}}(\kappa)\subseteq {\mathrm{so}_n}$ otherwise. 
Denoting by $E_{ij}$ the $n\times n$ matrix unit with the entry $1$ in the $(i,j)$ position and zeros everywhere else, we have 
\[\kappa((E_{12}+E_{21})\wedge (E_{23}+E_{32})\wedge \ldots \wedge (E_{n-1,n}+E_{n,n-1}))=
E_{1n}+{\mathrm{sign}}(\rho)E_{n1}.\] 
It follows that 
${\mathrm{im}}(\kappa)={\mathrm{so}_n}$  if  $n$  or $n-3$  is divisible by $4$ 
(by irreducibility of ${\mathrm{so}_n}$), 
so ${\mathrm{so}}_n$ is an $O_n$-module direct summand in $\bigwedge^{n-1}{\mathcal{N}}$, 
and ${\mathrm{im}}(\kappa)\supseteq{\mathcal{N}}$ 
when $n-1$ or $n-2$ is divisible by $4$ (in fact 
${\mathrm{im}}(\kappa)={\mathcal{N}}$ in the first case and 
${\mathcal{M}}$ in the second), so ${\mathcal{N}}$ is an $O_n$-module direct summand in $\bigwedge^{n-1}{\mathcal{N}}$. 

Since the matrices $H_i(A)$ pairwise commute for all $A\in{\mathcal{N}}$ (see (\ref{eq:H_i})), it follows that $\kappa\circ {\mathcal{T}} =0$, implying that  the $O_n$-module map ${\mathcal{T}}^\star$ factors through the surjection $(\bigwedge^{n-1}{\mathcal{N}})^\star\to\ker(\kappa)^\star$ (induced by the inclusion 
of $\ker(\kappa)$ into $\bigwedge^{n-1}{\mathcal{N}}$).  
Consequently, the summand ${\mathrm{so}_n}$ or ${\mathcal{N}}$ located above in 
$\bigwedge^{n-1}{\mathcal{N}} \cong (\bigwedge^{n-1}{\mathcal{N}})^\star$ is annihilated by 
${\mathcal{T}}^\star$, and 
 ${\mathcal{T}}^\star((\bigwedge^{n-1}{\mathcal{N}})^\star)$ is a non-zero homomorphic image of the $O_n$-module $\ker(\kappa)^\star\cong\ker(\kappa)$.

Next for $n=2l\geq 4$ even we  construct an $SO_n$-module surjection 
$\gamma:\bigwedge^{n-1}{\mathcal{N}}\to{\mathcal{N}}$; note that $\gamma$ is not $O_n$-equivariant. 
For an $n\times n$ skew-symmetric matrix $C$ denote by ${\mathrm{Pf}}(C)$ the pfaffian (see for example 
section 5.3.6 in \cite{procesi}). Note that for $A,B\in{\mathcal{N}}$ their commmutator 
$[A,B]=AB-BA$ is skew symmetric. Define the functions $F_{\alpha}$ on 
${\mathcal{N}}^n={\mathcal{N}}\oplus\cdots\oplus {\mathcal{N}}$ by 
\[{\mathrm{Pf}}(t_1[A_1,A_2]+\cdots+t_l[A_{n-1},A_n])=
\sum_{\alpha_1+\cdots+\alpha_l=l}t_1^{\alpha_1}\cdots t_l^{\alpha_l}F_{\alpha}(A_1,\ldots,A_n)\] 
where $t_1,\ldots,t_l$ are commuting indeterminates. 
Set $F:=F_{(1,\ldots,1)}$, so $F$ is an $n$-variable multilinear $SO_n$-invariant function on 
${\mathcal{N}}$. Define  
\[G(A_1,\ldots,A_n):=\frac{1}{n(n-2)\cdots 2}\sum_{\pi\in {\mathrm{Sym}}(n)}{\mathrm{sign}}(\pi)
F(A_{\pi(1)},\ldots,A_{\pi(n)})\]
an alternating $n$-variable multilinear $SO_n$-invariant on ${\mathcal{N}}$. 
The function $G$ is non-zero, since 
$G(B_1,C_1,\ldots,B_l,C_l)=1$ 
for the substitution 
\begin{equation}\label{eq:substitution} B_i:=E_{2i-1,2i-1}-E_{2i,2i}, \quad  
C_i:= \frac 12(E_{2i-1,2i}+E_{2i,2i-1}), \quad i=1,\ldots,l.
\end{equation}
Since $G$ is multilinear, it is naturally identified with 
\[\tilde G\in ({\mathcal{N}}\otimes\cdots\otimes{\mathcal{N}})^\star\cong {\mathcal{N}}^\star\otimes\cdots\otimes{\mathcal{N}}^\star.\] 
Identify the last tensor factor ${\mathcal{N}}^\star$ on the right hand side with ${\mathcal{N}}$ 
(using the trace form on ${\mathcal{N}}$), so view $\tilde  G$ as an element of 
\[\tilde G\in (({\mathcal{N}}^\star\otimes\cdots\otimes{\mathcal{N}}^\star)\otimes {\mathcal{N}})^{SO_n} \cong \hom_{SO_n}({\mathcal{N}}\otimes\cdots
\otimes {\mathcal{N}}, {\mathcal{N}}).\] 
Moreover, since $G$ is alternating, $\tilde G$ factors through the natural surjection 
${\mathcal{N}}\otimes\cdots\otimes{\mathcal{N}}\to\bigwedge^{n-1}{\mathcal{N}}$ and yields the desired non-zero element 
$\gamma\in\hom_{SO_n}(\bigwedge^{n-1}{\mathcal{N}},{\mathcal{N}})$. 
 It is easy to see that $\gamma\circ{\mathcal{T}}=0$: indeed, the commutator of any two of 
 $H_i(A)$, $i=1,\ldots,n-1$ (see (\ref{eq:H_i})) is zero, hence $F_{(1,\ldots,1)}$ becomes zero under a substitution of the arguments in any order by $A,H_2(A),\ldots,H_{n-1}(A), B$ 
 (where $A,B\in{\mathcal{N}}$ are arbitrary).  So $\gamma^\star$ embeds ${\mathcal{N}}^\star\cong{\mathcal{N}}$ as an $SO_n$-module direct summand in the kernel of ${\mathcal{T}}^\star$. 
 
 When $n-2$ is divisible by $4$, denote by $\kappa_1$ the composition of $\kappa$ and the projection 
 ${\mathrm{im}}(\kappa)={\mathcal{M}}={\mathcal{N}}\oplus {\mathbb{R}} I\to {\mathcal{N}}$. Then $\gamma$ and $\kappa_1$ are both $SO_n$-module surjections from $\bigwedge^{n-1}{\mathcal{N}}$ to ${\mathcal{N}}$. 
 However, they are not scalar multiples of each other, since $\kappa_1$ is $O_n$-equivariant, whereas $\gamma$ is not. (Alternatively,  
 $\kappa(B_1\wedge C_1\wedge\cdots \wedge B_l)=0$, where $B_i,C_j$ were defined in 
 (\ref{eq:substitution}), whereas 
 $\gamma(B_1\wedge C_1\wedge\cdots \wedge B_l)\neq 0$ as we pointed out above.) 
 Consequently,  the irreducible $SO_n$-module ${\mathcal{N}}$ appears with multiplicity $\geq 2$ as a summand in $\bigwedge^{n-1}{\mathcal{N}}$ when $n-2$ is divisible by $4$ (and $n>2$).

\begin{center} {\bf Acknowledgements}\end{center}  

Our interest in this problem was awakened by  a talk given by P\'eter Lax at the R\'enyi Institute of Mathematics in January of 2009. We are indebted to P\'eter Lax for encouragement, inspiring discussions, some references and corrections.



\frenchspacing
\bibliographystyle{plain}

\end{document}